\documentclass[11pt]{amsart}

\usepackage[T1]{fontenc}
\usepackage{lmodern}
\usepackage{microtype}
\usepackage{amsmath,amssymb,amsthm,mathtools}
\usepackage[margin=1in]{geometry}
\usepackage{enumitem}
\usepackage[colorlinks=true,linkcolor=blue,citecolor=blue,urlcolor=blue]{hyperref}
\usepackage{aliascnt}

\newtheorem{theorem}{Theorem}[section]
\newaliascnt{proposition}{theorem}
\newtheorem{proposition}[proposition]{Proposition}
\aliascntresetthe{proposition}
\newaliascnt{lemma}{theorem}
\newtheorem{lemma}[lemma]{Lemma}
\aliascntresetthe{lemma}
\newaliascnt{corollary}{theorem}

\aliascntresetthe{corollary}
\newaliascnt{definition}{theorem}
\newtheorem{definition}[definition]{Definition}
\aliascntresetthe{definition}
\newaliascnt{remark}{theorem}
\newtheorem{remark}[remark]{Remark}
\aliascntresetthe{remark}

\usepackage[nameinlink,capitalise]{cleveref}
\crefname{theorem}{Theorem}{Theorems}
\Crefname{theorem}{Theorem}{Theorems}
\crefname{proposition}{Proposition}{Propositions}
\Crefname{proposition}{Proposition}{Propositions}
\crefname{lemma}{Lemma}{Lemmas}
\Crefname{lemma}{Lemma}{Lemmas}
\crefname{corollary}{Corollary}{Corollaries}
\Crefname{corollary}{Corollary}{Corollaries}
\crefname{definition}{Definition}{Definitions}
\Crefname{definition}{Definition}{Definitions}
\crefname{remark}{Remark}{Remarks}
\Crefname{remark}{Remark}{Remarks}

\numberwithin{equation}{section}

\newcommand{\Z}{\mathbb Z}

\newcommand{\codeg}{\operatorname{codeg}}
\newcommand{\ceil}[1]{\left\lceil #1\right\rceil}
\newcommand{\floor}[1]{\left\lfloor #1\right\rfloor}

\title[On the Duke--Erd\H{o}s--R\"odl Problem]{On the Duke--Erd\H{o}s--R\"odl Problem at the One-Third Threshold}

\author[E. Li]{Eric Li}
\dedicatory{\normalfont\normalsize Trinity College, University of Cambridge, United Kingdom}
\date{June 1, 2026}
\thanks{Email addresses: \href{mailto:el593@cam.ac.uk}{el593@cam.ac.uk}, \href{mailto:contact@ericli.com}{contact@ericli.com}.}

\subjclass[2020]{05C35, 05C38, 05C80}
\keywords{cycle-connected subgraphs, sparse graphs, short cycles, random lifts, dependent random choice}

\hypersetup{
  pdftitle={On the Duke--Erd\H{o}s--R\"odl Problem at the One-Third Threshold},
  pdfauthor={Eric Li},
  pdfsubject={Cycle-connected subgraphs and random lifts},
  pdfkeywords={cycle-connected subgraphs, sparse graphs, short cycles, random lifts}
}

\begin{document}

\begin{abstract}
Let \(G\) be an \(n\)-vertex graph with \(e(G)\ge n^2/k\).  We prove a self-contained internal short-cycle core theorem at the threshold \(k\le n^{1/3}\): the graph \(G\) contains a subgraph \(H_6\) with \(\Omega(n^2/k^3)\) edges in which every two distinct edges lie together on a cycle of length at most \(6\) contained in \(H_6\), and a subgraph \(H_8\) with \(\Omega(n^2/k^2)\) edges in which every two distinct edges lie together on a cycle of length at most \(8\) contained in \(H_8\).  In density notation \(\rho=e(G)/n^2\), this gives internal cores of sizes \(\Omega(\rho^3n^2)\) and \(\Omega(\rho^2n^2)\) throughout the range \(\rho\ge n^{-1/3}\).

The \(C_{\le6}\) conclusion above is an edge-connected statement and does not impose the adjacent-edge \(C_4\) condition appearing in the strongest Duke--Erd\H{o}s--R\"odl formulation.  We also include two complementary results clarifying this distinction.  First, under the ambient-witness convention, every graph with at least \(n^2/k\) edges and \(k=o(n^{1/2})\) contains \(\Omega(n^2/k^3)\) selected edges whose pairs are witnessed by ambient cycles of length at most \(6\), with adjacent pairs witnessed by ambient \(C_4\)'s.  Second, under the standard internal strong \(C_6\) convention, for every fixed \(\beta\in[1/3,1/2)\) there is an infinite sequence of bipartite graphs \(G\) with \(n\to\infty\) and \(e(G)=\Theta_\beta(n^{2-\beta})\) such that every internally strongly \(C_6\)-connected subgraph has only \(O_\beta(\rho(G)^3n^2/(\log n)^2)\) edges.  The obstruction is a random cyclic shift-lift of \(K_{q,q}\), together with an occupancy estimate excluding large aligned two-covers.
\end{abstract}

\maketitle

\section{Introduction}

Duke, Erd\H{o}s, and R\"odl initiated a systematic study of large subgraphs in which pairs of edges lie together on short cycles; see \cite{DuEr82,DER84}.  One form of the problem asks whether an \(n\)-vertex graph with density \(\rho=e(G)/n^2\) must contain a subgraph with \(\gg \rho^2n^2\) edges in which every two edges lie together on a cycle of length at most \(8\).  A stronger \(C_6\)-version asks for \(\gg \rho^3n^2\) edges, with every two edges lying together on a cycle of length at most \(6\), and with adjacent pairs lying on \(C_4\)'s. Fox and Sudakov proved the \(C_{\le8}\) statement in the polynomial range \(e(G)=n^{2-\beta}\), \(0<\beta<1/5\), with a strong adjacent-edge conclusion \cite{FoxSudakov08}.

The natural boundary for these density exponents is \(\beta = 1/2\). At the scale \(\rho = \Theta(n^{-1/2})\), standard K\H{o}v\'ari--S\'os--Tur\'an constructions---such as the incidence graphs of finite projective planes---yield graphs with \(\Theta(n^{3/2})\) edges that are entirely \(C_4\)-free. Because the strongest formulations of the Duke--Erd\H{o}s--R\"odl problem explicitly require adjacent edges to lie on a common \(C_4\), the exponent \(\beta = 1/2\) acts as an absolute structural ceiling beyond which the strong condition becomes trivially impossible to satisfy.

This paper proves internal \(C_{\le6}\)- and \(C_{\le8}\)-connected core theorems up to the exponent \(\beta=1/3\), and also gives an obstruction to the corresponding internal strong \(C_6\) statement at and above that exponent.  The precise conventions matter, so we state them explicitly.

All graphs in the paper are finite and simple.  A cycle is a simple cycle, not necessarily an induced cycle.  If \(X,Y\) are vertex sets in a graph \(G\), then \(G[X,Y]\) denotes the bipartite subgraph induced by the edges of \(G\) with one endpoint in \(X\) and one endpoint in \(Y\).  All implicit constants in \(O(\cdot),\Omega(\cdot),\ll,\gg\) are absolute unless a subscript indicates dependence on a parameter.  When a statement is made for all sufficiently large \(n\) with \(1\le k\le n^{1/3}\), the lower bound on \(n\) is independent of \(k\) in that range. These definitions, as well as the prior literature review, were refined through the use of AI tools (such as the "Web Search" function of ChatGPT), which also aided in accelerating the research process through literature review, ideation, ruling out unpromising routes, considering boundary conditions, and other forms of orchestration. The author nonetheless takes full responsibility for the mathematical accuracy of the final contents of this paper.

\begin{definition}[Witness conventions]\label{def:witness-conventions}
Let \(G\) be a graph.
\begin{enumerate}[label=\textup{(\roman*)}]
\item A subgraph \(H\subseteq G\) is \emph{internally \(C_{\le r}\)-connected} if every two distinct edges of \(H\) lie together on a cycle of length at most \(r\) contained in \(H\).
\item An edge set \(F\subseteq E(G)\) is \emph{ambiently strongly \(C_6\)-connected in \(G\)} if every two distinct edges of \(F\) lie together on a cycle of length at most \(6\) in \(G\), and every two adjacent distinct edges of \(F\) lie together on a \(C_4\) in \(G\).  The witness cycles are allowed to use edges outside \(F\).
\item A bipartite graph \(H\) is \emph{internally strongly \(C_6\)-connected} if every two distinct edges of \(H\) lie together on a \(C_4\) or \(C_6\) contained in \(H\), and every two adjacent distinct edges of \(H\) lie together on a \(C_4\) contained in \(H\).
\end{enumerate}
\end{definition}

The main positive theorem is the following internal core result.

\begin{theorem}[Internal short-cycle cores up to \(k\le n^{1/3}\)]\label{thm:internal-cores-intro}
There are absolute constants \(c_6,c_8>0\) such that the following holds for all sufficiently large \(n\).  Let \(1\le k\le n^{1/3}\), and let \(G\) be an \(n\)-vertex graph with
\[
        e(G)\ge \frac{n^2}{k}.
\]
Then \(G\) contains subgraphs \(H_6,H_8\) satisfying
\[
        e(H_6)\ge c_6\frac{n^2}{k^3},
        \qquad
        e(H_8)\ge c_8\frac{n^2}{k^2},
\]
such that \(H_6\) is internally \(C_{\le6}\)-connected and \(H_8\) is internally \(C_{\le8}\)-connected.  Consequently, for every fixed \(0<\beta\le1/3\), every sufficiently large \(n\)-vertex graph with at least \(n^{2-\beta}\) edges contains internally \(C_{\le6}\)- and \(C_{\le8}\)-connected subgraphs with \(\Omega(n^{2-3\beta})\) and \(\Omega(n^{2-2\beta})\) edges, respectively.
\end{theorem}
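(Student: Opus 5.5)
We would build both cores by a single dependent-random-choice construction inside a nearly regular bipartite piece of \(G\). First we pass to a bipartite subgraph with at least half the edges, then to a subgraph of minimum degree at least \(n/(4k)\). Standard degree-cleaning lets us assume a bipartite graph \(G'[A,B]\) with every vertex of degree \(\Theta(n/k)\) on its side, on a vertex set of size \(m\) with \(n/k\ll m\le n\); only constant factors are lost. Throughout we work with the codegree \(\codeg(x,y)=|N(x)\cap N(y)|\) of pairs inside \(A\).

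\textbf{Step 1 (the \(C_{\le 8}\) core).} We pick a random vertex \(b\in B\) and set \(X=N(b)\subseteq A\), so \(|X|\approx n/k\). We then look at the set \(Y\subseteq B\) of vertices having many neighbours in \(X\), say at least \(|X|/(2k)\). By convexity, the edges of \(G'[X,B]\) landing in \(Y\) have expected size \(\Omega(n^2/k^2)\). Next we delete from \(X\) the pairs \(x,x'\) with small codegree into \(Y\); in dependent-random-choice fashion, the expected number of such pairs in \(N(b)\) is small. What remains is a subgraph \(H_8=G'[X',Y']\) with \(\Omega(n^2/k^2)\) edges, in which every two vertices of \(X'\) have at least \(2\) (indeed many) common neighbours in \(Y'\), and every vertex of \(Y'\) has degree \(\ge 2\).

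Given edges \(x_1y_1,x_2y_2\), we join them by a cycle \(x_1y_1\,x'\,y\,x_2y_2\,x''\,y'\,x_1\) of length \(8\) using codegree paths, or by a shorter cycle when the edges overlap. The codegree \(\ge\) some absolute constant is exactly what lets us choose the internal vertices to be distinct. The hypothesis \(k\le n^{1/3}\) is used here: the typical codegree inside \(N(b)\) is \(\gtrsim n/k^2\cdot(\text{density})\sim n/k^3\ge 1\), and this must be large enough to survive the cleaning and to avoid collisions.

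\textbf{Step 2 (the \(C_{\le 6}\) core).} Here we go one level deeper. We fix an edge \(ab\), or a random path, and restrict to \(X=N(b)\) and \(Y=N(a)\) with \(|X|,|Y|\approx n/k\). We take \(H_6\) to be the subgraph of \(G'[X,Y]\), whose expected size is \(\approx (n/k)^2\cdot(1/k)=n^2/k^3\), after pruning. Every vertex \(x\in X\) is adjacent to \(b\), and every \(y\in Y\) is adjacent to \(a\). The goal is to replace these external hubs \(a,b\) by internal ones: in the pruned graph we keep only pairs with codegree \(\ge 2\) on both sides. We then show that any two edges \(x_1y_1,x_2y_2\) can be closed into a \(6\)-cycle \(x_1y_1x'y_2x_2y'x_1\), with \(x'\in N(y_1)\cap N(y_2)\) and \(y'\in N(x_1)\cap N(x_2)\), distinct from the endpoints. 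This needs every pair on each side to have codegree \(\ge 3\) inside \(H_6\). Since the expected codegree is \(\sim n/k^3\ge1\) precisely when \(k\le n^{1/3}\), a random sampling argument with a second-moment deletion produces a large sub-core with that property. Alternatively, we iterate deletion of low-codegree pairs by removing vertices, and show that the total edge loss is at most half in expectation.

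\textbf{Main obstacle.} The key difficulty is that codegree conditions cannot be enforced for \emph{all} pairs by deleting few edges when the typical codegree is only constant at \(k=n^{1/3}\). Pair-deletion destroys vertices, not pairs. My first attempt would be to pass to an auxiliary "codegree graph" on \(X\), joining \(x,x'\) when \(\codeg\ge3\), and to find a large clique-like piece. Failing that, we would weaken the requirement to connectivity through bounded-length codegree walks. This is harmless for \(C_{\le8}\), but for \(C_{\le6}\) I expect it to need a careful choice of the dependent-random-choice parameters so that the surviving set is genuinely pairwise codegree-rich; this is the step most likely to need rework.
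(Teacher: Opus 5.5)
There is a genuine gap in both steps, and it is the same one. You try to enforce codegree conditions between \emph{all} same-side pairs. The available counting cannot deliver this. What works instead is to keep the centre vertices inside the subgraph as hubs.

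\textbf{Step 1.} Degrees are \(O(n/k)\), so \(e(G'[X',Y'])\gg n^2/k^2\) forces \(|X'|\gg n/k\). That is, a constant fraction of \(X=N(b)\) must be pairwise codegree-rich. A pair of codegree \(<T\) lies in fewer than \(T\) neighbourhoods. Hence the only available bound on the number of bad pairs inside \(N(b)\) is of order \(T|A|^2/|B|\asymp Tn\), which is larger than \(|X|\asymp n/k\) by a factor \(\asymp k\). It is not ``small''. Deleting one vertex per bad pair wipes out \(X\), and Tur\'an's bound only gives a pairwise-good subset of size \(\asymp n/k^2\), hence \(n^2/k^3\) edges. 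Your ``Main obstacle'' paragraph essentially concedes this, so Step 1 asserts what the proposal does not prove.

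The paper avoids the problem by putting the centre \(w\) into \(H_8\) together with all star edges \(wb\). Then only a sparse set of \emph{anchors} needs to be pairwise good:
\begin{itemize}
\item The centre is chosen so that the degree-weighted number of bad pairs in \(N(w)\) is \(O(N^2/K)\).
\item Each left vertex \(a\) with \(|X_a|=|N(a)\cap N(w)|\ge L\asymp N/K^2\) gets one anchor \(\sigma(a)\in X_a\). Anchors come from a random independent set of the bad-pair graph on \(N(w)\), sampled at rate \(p\gtrsim 1/K\).
\item A fixed \(X_a\) retains an anchor with constant probability because \(L/K\asymp N/K^3\gtrsim 1\). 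This is exactly where \(k\le n^{1/3}\) enters. So \(\gg n^2/k^2\) edges survive.
\item Two edges are joined through petals \(w-b-a-\sigma(a)-w\) and a fresh common neighbour of two anchors. Codegree \(\ge100\) lets you avoid all named vertices, giving cycles of length at most \(8\).
\end{itemize}

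\textbf{Step 2.} Requiring codegree \(\ge3\) for every same-side pair inside \(H_6\subseteq G'[N(b),N(a)]\) is hopeless at the threshold. For \(k\asymp n^{1/3}\) the typical codegree there is a constant. So already in a random graph a positive proportion of pairs fail, and a typical vertex lies in \(\Theta(n/k)\) failing pairs. Meanwhile a core with \(\Omega(n^2/k^3)\) edges must keep \(\Omega(n/k)\) vertices per side. Indeed, the paper's random shift-lifts, at \(k\) a large constant multiple of \(n^{1/3}\) (just outside the theorem's range), have no subgraph with all same-side codegrees \(\ge2\) and more than \(O(n^2/(k^3(\log n)^2))\) edges.

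Yet you had the right object. Since \(a\in N(b)\) and \(b\in N(a)\), the hubs already lie in \(G'[N(b),N(a)]\); keep them. Set \(P=\{x\in N(b)\setminus\{a\}: N(x)\cap N(a)\not\subseteq\{b\}\}\) and \(H_6=G'[P\cup\{a\},\bigcup_{x\in P}N(x)\cap N(a)]\).
\begin{itemize}
\item Every edge lies on a \(4\)-cycle \(a-y-x-b-a\) through the fixed edge \(ab\).
\item Any two such petals combine into a cycle of length at most \(6\). This is the paper's same-anchor petal lemma with \(w=a\), \(s=b\). No codegree condition is needed.
\item \(e(H_6)\) is at least the number of \(C_4\)'s through \(ab\). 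For a suitable edge this is \(\gg n^2/k^3\) by a standard \(C_4\) count.
\end{itemize}
The paper reaches the same two-hub structure differently. It fixes \(w\) and finds the common anchor \(s\) by averaging \(\sum_s M_s=\sum_a|X_a|^2\ge L\,e(F_w)\).
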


The proof of \cref{thm:internal-cores-intro} is a one-centre construction.  After a density normalization, we choose a vertex \(w\), examine the neighbourhood \(B_w=N(w)\), and keep left vertices with many neighbours in \(B_w\).  A weighted estimate controls pairs of vertices in \(B_w\) with small total codegree.  A randomized hitting lemma then selects a set of mutually compatible anchors, and a deterministic petal-routing argument turns the resulting structure into an internally \(C_{\le8}\)-connected subgraph.  A common-anchor averaging step gives the internally \(C_{\le6}\)-connected subgraph.

The next theorem is an ambient companion for the strong adjacent-edge \(C_4\) requirement.  It is not used in the proof of \cref{thm:internal-cores-intro}, but it is useful for comparison with the strongest formulation of the problem.

\begin{theorem}[Ambient strong \(C_6\) edge sets]\label{thm:ambient-c6-intro}
Let \(k=k(n)\) satisfy \(1\le k=o(n^{1/2})\).  If \(G\) is an \(n\)-vertex graph with \(e(G)\ge n^2/k\), then \(G\) contains an edge set \(F\subseteq E(G)\) with
\[
        |F|\gg \frac{n^2}{k^3}
\]
that is ambiently strongly \(C_6\)-connected in \(G\).
\end{theorem}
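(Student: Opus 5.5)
The plan is a one-centre construction with a second random centre, chosen so that the distinguished vertices themselves supply every witness cycle. \textbf{Step 1 (normalization).} Pass from \(G\) to a subgraph \(G'\) with minimum degree at least \(n/(2k)\) by repeatedly deleting vertices of smaller degree; this keeps at least half the edges. Then take a random bipartition \(V=X\cup Y\) and keep a bipartite subgraph in which, after another round of low-degree deletion, every vertex has degree \(\gg n/k\) into the opposite side. Bipartiteness keeps the two endpoints of every selected edge in different roles, so the witness cycles below cannot degenerate through coincidences such as \(x=b'\).

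\textbf{Step 2 (two centres and the edge set).} Fix any \(w\in X\), and set \(B=N(w)\subseteq Y\), so that \(|B|\gg n/k\). Choose \(u\in Y\setminus\{w\}\) uniformly at random. Set \(A_u=N(u)\subseteq X\) minus \(w\), and
\[
F_u=\{xb\in E(G): x\in A_u,\ b\in B\setminus\{u\}\}.
\]
Then
\[
\mathbb E|F_u|=\sum_{x\in X}\frac{\deg(x)}{|Y|}\,\deg_B(x)-O(n)\gg \frac{n}{k}\cdot\frac{1}{n}\sum_{b\in B}\deg(b)\gg \frac{n}{k}\cdot\frac1n\cdot\frac nk\cdot\frac nk=\frac{n^2}{k^3},
\]
using the minimum-degree bound twice. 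The \(O(n)\) term accounts for the removed edges at \(u\) and at \(w\), and it is negligible because \(k=o(n^{1/2})\) gives \(n^2/k^3\gg n^{1/2}\cdot n\cdot k^{-1}\cdots\). More precisely, \(n^2/k^3\gg n\) needs \(k=o(n^{1/3})\). I therefore expect to have to handle the range \(n^{1/3}\lesssim k=o(n^{1/2})\) more carefully: either remove only \(O(\deg_B(u))\) edges, or note that the only bad edges are \(xu\) with \(u\in B\), whose expected number is \(\ll\sum_x\deg(x)/|Y|\cdot 1\), which is fine. Fix a \(u\) achieving the expectation.

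\textbf{Step 3 (verifying the witnesses).} All witness cycles use only the vertices \(w,u\) and the endpoints of the two edges. Take two edges \(xb,yb'\in F_u\) with \(x,y\in A_u\subseteq X\) and \(b,b'\in B\setminus\{u\}\subseteq Y\). There are three cases.
\begin{enumerate}[label=\textup{(\alph*)}]
\item If \(x=y\) and \(b\ne b'\), the cycle \(x\,b\,w\,b'\,x\) is a \(C_4\) in \(G\), since \(x\ne w\).
\item If \(b=b'\) and \(x\ne y\), the cycle \(x\,b\,y\,u\,x\) is a \(C_4\), since \(u\ne b\) and \(x,y\in N(u)\).
\item If \(x\ne y\) and \(b\ne b'\), the cycle \(x\,b\,w\,b'\,y\,u\,x\) is a \(C_6\). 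Here the vertices \(x,y,w\) lie in \(X\) and are distinct, and \(b,b',u\) lie in \(Y\) and are distinct.
\end{enumerate}
Hence \(F_u\) is ambiently strongly \(C_6\)-connected, and adjacent pairs are witnessed by \(C_4\)'s as required.

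\textbf{Main obstacle.} The delicate point is the bookkeeping in Steps 1--2 across the whole range \(k=o(n^{1/2})\). Three things must hold there: the degree lower bounds must survive the bipartition; \(|B|\) must stay large; and the edges discarded to guarantee \(u\notin\{b,b'\}\) and \(w\notin A_u\) must be negligible. I would first try bounding the discarded edges by \(\deg_B(u)+\deg(w)\le 2n\) against the main term. Where that comparison fails, I would use the sharper count \(\mathbb E[\#\{x\in A_u: xu\in E,\ u\in B\}]\le\sum_{b\in B}\deg(b)^2/|Y|\cdot\frac{1}{\deg}\), a Cauchy--Schwarz refinement. The assumption \(k=o(n^{1/2})\) enters exactly here, ensuring \(n/k\to\infty\) so that the lower-order losses are dominated.
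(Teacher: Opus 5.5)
Your construction and witness cycles are correct. With $x,y\in N(u)\setminus\{w\}$ and $b,b'\in N(w)\setminus\{u\}$, the cycles $x\,b\,w\,b'\,x$, $x\,b\,y\,u\,x$ and $x\,b\,w\,b'\,y\,u\,x$ are genuine and never use an edge $wu$, so $w$ and $u$ need not be adjacent.

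The weak point is the degenerate-edge bookkeeping. You leave it open for $n^{1/3}\lesssim k$, and the fixes you sketch do not work as stated:
\begin{itemize}
\item In the bipartite graph $\deg_B(u)=0$ for $u\in Y$, so ``remove only $O(\deg_B(u))$ edges'' is meaningless.
\item Since $u\in B$ if and only if $w\in N(u)$, the two losses always occur together: the $\deg(u)$ edges $xu$ and the $|B|$ edges $wb$. So ``the only bad edges are $xu$'' forgets half of them.
\item Bounding the $xu$-loss by $\sum_x\deg(x)/|Y|=e(G_0)/|Y|$ is not negligible in general. Nothing in your normalization bounds $e(G_0)$ from above, and edges far from $B$ do not contribute to the main term.
\item The hypothesis $k=o(n^{1/2})$ is not what makes $n/k\to\infty$; that only needs $k=o(n)$.
\end{itemize}

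The clean fix is to count exactly. $|F_u|$ equals the number of paths $w\,b\,x\,u$ of length $3$, i.e.\ with $x\ne w$ and $b\ne u$. The total number of such paths starting at $w$ is
\[
\sum_{b\in B}\ \sum_{x\in N(b)\setminus\{w\}}(\deg(x)-1)\ \ge\ |B|(\delta-1)^2\ \ge\ \delta(\delta-1)^2 .
\]
Pigeonholing over $u\in Y$ then gives some $u$ with $|F_u|\ge\delta(\delta-1)^2/n\gg n^2/k^3$, with no error terms to compare. Equivalently, in your expectation the $w$-loss is exactly the $x=w$ summand, so drop it at the start. The $u$-loss $\frac1{|Y|}\sum_{b\in B}(\deg(b)-1)$ is then at most a $1/\delta$ fraction of the remaining main term $\frac{\delta}{|Y|}\sum_{b\in B}(\deg(b)-1)$.

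With that repair, your argument is a genuinely different and simpler route than the paper's. The paper caps $e(G_0)\le 2n^2/k$ and counts cherries and then $C_4$'s by two convexity steps. The second step needs average codegree $\gg n/k^2\to\infty$, which is where $k=o(n^{1/2})$ enters. It then picks an edge $xy$ lying in $\gg n^2/k^3$ copies of $C_4$. Its set $F=E(G_0[N(y)\setminus\{x\},N(x)\setminus\{y\}])$ and its witness cycles are exactly the special case $w=x$, $u=y$ of yours. There the adjacency of $x$ and $y$ serves only to relate $|F|$ to the $C_4$ count.

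Your averaging over the endpoints of $3$-paths from an arbitrary $w$ uses only the minimum degree. It therefore needs neither the edge cap nor the convexity step. It in fact gives the conclusion whenever $\delta\to\infty$, i.e.\ for all $k=o(n)$, not only for $k=o(n^{1/2})$.
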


Finally, we prove that the corresponding internal strong \(C_6\) lower bound cannot hold at the polynomial scales \(\rho(G)=\Theta(n^{-\beta})\) with \(\beta\ge1/3\).

\begin{theorem}[Internal strong \(C_6\) obstruction]\label{thm:negative-intro}
For every fixed \(\beta\in[1/3,1/2)\), there is an infinite sequence of integers \(n\to\infty\) and bipartite graphs \(G\) on \(n\) vertices with
\[
        e(G)=\Theta_\beta(n^{2-\beta}),
        \qquad
        \rho(G):=\frac{e(G)}{n^2}=\Theta_\beta(n^{-\beta}),
\]
such that every internally strongly \(C_6\)-connected subgraph \(H\subseteq G\) satisfies
\[
        e(H)=O_\beta\left(\frac{\rho(G)^3n^2}{(\log n)^2}\right)
        =O_\beta\left(\frac{n^{2-3\beta}}{(\log n)^2}\right).
\]
In particular, no positive constant multiple of \(\rho(G)^3n^2\) can be forced uniformly at any fixed density scale \(\rho(G)=\Theta(n^{-\beta})\) with \(\beta\in[1/3,1/2)\).
\end{theorem}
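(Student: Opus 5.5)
We have to sketch a proof of Theorem \ref{thm:negative-intro}, the obstruction. The abstract already names the construction: a random cyclic shift-lift of \(K_{q,q}\), together with an occupancy estimate excluding large aligned two-covers. The plan is to build that graph, show that any internally strongly \(C_6\)-connected subgraph must sit inside one small rigid structure, and bound that structure by occupancy.

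\textbf{Construction.} Fix \(\beta\). Take a base graph \(K_{q,q}\) and a cyclic group \(\Z_m\). Assign independent uniform shifts \(\sigma_{ab}\in\Z_m\) to the edges \(ab\) of the base graph. Let \(G\) have vertex classes \(\{a\}\times\Z_m\) and \(\{b\}\times\Z_m\), with \((a,x)\sim(b,x+\sigma_{ab})\). Then
\[
n=2qm,\qquad e(G)=q^2m,\qquad \rho\asymp 1/m .
\]
Choosing \(m\asymp n^{\beta}\) and \(q\asymp n^{1-\beta}/2\) gives the required density.

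\textbf{Short cycles in the lift.} A closed walk in \(G\) projects to a closed walk in \(K_{q,q}\) whose signed shift sum vanishes in \(\Z_m\).
\begin{itemize}
\item A \(4\)-cycle \(a b a' b'\) lifts only if \(\sigma_{ab}-\sigma_{a'b}+\sigma_{a'b'}-\sigma_{ab'}\equiv 0\). This happens with probability \(1/m\), so the base \(C_4\)'s that lift form a random \(1/m\)-subset of all \(\asymp q^4\) base \(C_4\)'s.
\item Likewise, base \(6\)-cycles lift with probability \(1/m\).
\item Each lifted base cycle yields exactly \(m\) lifted cycles, one for each starting fibre point.
\end{itemize}
Because \(q\gg m\) (since \(\beta<1/2\)), the expected number of lifted \(C_4\)'s through a fixed pair of adjacent edges is about \(q/m\). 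So adjacent pairs do lie on many \(C_4\)'s, and the obstruction must exploit rigidity of how these \(C_4\)'s fit together rather than their absence.

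\textbf{Key structural claim.} If \(H\) is internally strongly \(C_6\)-connected, then the edges of \(H\) at a vertex \(v\) pairwise lie on \(C_4\)'s inside \(H\). Consider vertices \(u,u'\) adjacent to \(v\) in \(H\): each pair has a common neighbour \(\ne v\) in \(H\), which imposes linear shift equations. I would show that, with high probability, any configuration in which a set \(S\) of neighbours of \(v\) is pairwise \(C_4\)-linked inside \(H\) forces an "aligned two-cover". By this I mean a set of base vertices \(A'\) and \(B'\) together with fibre offsets such that all relevant shift differences coincide, i.e. the base edges involved satisfy \(\sigma_{ab}=x_a-y_b\) for suitable potentials. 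The edges of \(H\) then live on a coset-aligned sub-lift. The global \(C_{\le6}\) condition then glues these local pieces together, showing that \(H\) projects injectively onto a base subgraph on which \(\sigma\) is a coboundary (up to a bounded number of exceptional classes).

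\textbf{Occupancy estimate.} This is the main obstacle. For a random \(\sigma\), the largest \(A'\times B'\) sub-bipartite set of base edges with \(\sigma_{ab}=x_a-y_b\) should have size about \(O((q/m)^2/(\log n)^2)\)-ish once balanced. The heuristic: for fixed potentials, each edge matches with probability \(1/m\). Counting dense aligned patterns then gives a first-moment bound in the spirit of a balls-into-bins maximum load, and this is where the \(\log n\) factors appear. I would first try to run a union bound over \(x_a\) restricted to \(A'\), with \(y_b\) determined greedily, and to bound \(e(H)\) by the number of aligned edges, roughly \(q^2/m^2\cdot\) (a lost log-factor term). Converting this gives
\[
e(H)\ll \frac{n^2}{m^3(\log n)^2}\asymp \frac{\rho^3n^2}{(\log n)^2}.
\]
I expect the delicate points to be twofold:
\begin{itemize}
\item making the local-to-global gluing rigorous when \(C_6\) witnesses (not only \(C_4\)'s) connect distant edges;
\item obtaining the exact \((\log n)^2\) saving rather than \(O(1)\), which requires \(q/m\) to be polynomially large. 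This holds for \(\beta<1/2\).
\end{itemize}
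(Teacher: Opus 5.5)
Your construction is the right one (a random cyclic shift-lift of \(K_{q,q}\)), but the two load-bearing steps are missing or aimed at the wrong quantity.

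\textbf{The structural reduction.} This needs no probabilistic local-to-global gluing; it is deterministic and exact. In a bipartite internally strongly \(C_6\)-connected \(H\), any two distinct active vertices \(u,u'\) on the same side have at least two common neighbours in \(H\):
\begin{enumerate}[label=\textup{(\roman*)}]
\item one, because an edge at \(u\) and an edge at \(u'\) lie on a \(C_4\) or \(C_6\) in \(H\), and any two same-class vertices of such a cycle share a neighbour on it;
\item a second, because if \(b\) were the only one, the adjacent edges \(ub,u'b\) could not lie on a \(C_4\) in \(H\).
\end{enumerate}
Two vertices in the same fibre have no common neighbour at all in \(G\), since each pair of fibres is joined by a shift matching. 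So every fibre contains at most one active vertex of \(H\). This gives the injective projection, with offsets \(\alpha_i,\gamma_j\) satisfying \(\gamma_j=\alpha_i+\sigma_{ij}\) on every edge of \(H\). There are no exceptional classes and no dependence on the randomness, so your first ``delicate point'' disappears.

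\textbf{The occupancy estimate.} This is the more serious gap. Bounding \(e(H)\) by the number of aligned base edges cannot produce any saving: already for zero offsets, the edges with \(\sigma_{ab}=0\) number about \(q^2/m\asymp\rho^3n^2\). Your scale \((q/m)^2/(\log n)^2\) is also inconsistent with your own conversion, since \(n^2/m^3\asymp q^2/m\).

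What must be bounded is the number of active fibres on each side. By the codegree fact, any \(a\) active rows with their offsets form an aligned two-cover: every pair of them lies together in at least two of the sets \(\{i:\sigma_{ij}+\alpha_i=\gamma_j\}\), one set per column. A first-moment argument then proceeds as follows:
\begin{enumerate}[label=\textup{(\roman*)}]
\item use \(\sum_jM_j^2\ge2a(a-1)\), where \(M_j\) is the maximum load of column \(j\);
\item discard columns with \(M_j<a/(100\sqrt q)\);
\item apply an exponential moment to the remaining columns;
\item union-bound over the \(\binom qa m^a\) choices of rows and offsets.
\end{enumerate}
This shows that, with high probability, no such cover exists for \(a=\ceil{q/(\sqrt m\log m)}\), and symmetrically for columns. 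Writing \(A_H,B_H\) for the numbers of active fibres on the two sides, at most one edge joins two active fibres, so \(e(H)\le A_HB_H<a^2\asymp q^2/(m(\log m)^2)\).

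\textbf{The role of \(\beta\ge1/3\).} The per-column tail bound above requires \(\log\bigl(m/(100e\sqrt q)\bigr)>0\), that is, \(q\) at most a small constant times \(m^2\). This is exactly \(\beta\ge1/3\). Your sketch only uses the lower constraint (\(q/m\) polynomially large) and never uses \(\beta\ge1/3\). Any correct argument must use it: when \(q/(m^2\log q)\to\infty\), the zero slice itself is, with high probability, internally strongly \(C_6\)-connected with \((1+o(1))q^2/m\) edges.
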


Theorems \ref{thm:internal-cores-intro} and \ref{thm:negative-intro} concern different cycle requirements.  The positive \(C_{\le6}\) core in \cref{thm:internal-cores-intro} does not require adjacent edges to lie on \(C_4\)'s.  The obstruction in \cref{thm:negative-intro} applies to the stronger internal \(C_6\) condition from \cref{def:witness-conventions}.

\section{Internal short-cycle cores up to the one-third threshold}\label{sec:internal-cores}

This section proves \cref{thm:internal-cores-intro}.  For a bipartite graph \(J=(A,B;E)\) and two vertices \(u,v\in B\), write
\[
        \codeg_J(u,v)=|N_J(u)\cap N_J(v)|.
\]

\subsection{Normalization and local mass}

\begin{lemma}[Bipartite normalization]\label{lem:normalization-core}
Let \(1\le k\le n^{1/3}\), and let \(G\) be an \(n\)-vertex graph with \(e(G)\ge n^2/k\).  Then \(G\) contains a bipartite subgraph
\[
        J=(A,B;E)
\]
on \(N\) vertices such that, after writing
\[
        e(J)=\frac{N^2}{K},
\]
one has
\[
        \delta_{\min}(J)\ge \frac{N}{2K},
        \qquad
        \frac{N}{K}\ge \frac{n}{2k}.
\]
Consequently,
\[
        \frac{N^2}{K^2}\ge \frac{n^2}{4k^2},
        \qquad
        \frac{N^2}{K^3}\ge \frac{n^2}{8k^3},
        \qquad
        \frac{K^3}{N}\le 8.
\]
\end{lemma}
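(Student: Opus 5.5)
Take a maximum cut of \(G\) to get a bipartite subgraph, then delete low-degree vertices one at a time; the density can only increase under deletion, which is what the degree threshold is calibrated to guarantee.

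\textbf{Step 1 (bipartite subgraph).} A maximum cut gives a bipartite subgraph \(J_0\subseteq G\) on vertex set \(V(G)\) with \(e(J_0)\ge e(G)/2\ge n^2/(2k)\). Define \(K_0\) by \(e(J_0)=n^2/K_0\), so \(K_0\le 2k\).

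\textbf{Step 2 (peeling).} Starting from \(J_0\), repeatedly delete a vertex whose degree in the current graph is less than \(N'/(2K')\), where \(N'\) is the current number of vertices and \(e=N'^2/K'\). Equivalently the threshold is \(e/(2N')\), half the average degree. Track the quantity \(e/N'\).

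Deleting a vertex of degree \(d<e/(2N')\) gives new ratio \((e-d)/(N'-1)\). This is at least \(e/N'\) iff \(d\le e/N'\), which holds. So \(e/N'\) is nondecreasing during the process, and in particular \(e/N'\ge e(J_0)/n\ge n/(2k)>0\) throughout. Hence the graph never becomes empty: an empty graph has \(e=0\).

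\textbf{Step 3 (conclusion of the process).} The process stops at some \(J=(A,B;E)\) on \(N\) vertices with \(e(J)=N^2/K\). At that point every vertex has degree at least \(e(J)/(2N)=N/(2K)\); this is \(\delta_{\min}(J)\ge N/(2K)\). Monotonicity also gives
\[
        \frac NK=\frac{e(J)}{N}\ge \frac{e(J_0)}{n}\ge \frac{n}{2k}.
\]
Deleted vertices need not be re-examined, since the stopping condition refers only to the final graph. If \(J\) has isolated structure, or one side is empty, then \(e(J)>0\) contradicts this. So \(J\) genuinely has edges, and \(A,B\) are the two sides inherited from the cut.

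\textbf{Step 4 (consequences).} Squaring \(N/K\ge n/(2k)\) gives \(N^2/K^2\ge n^2/(4k^2)\). For the remaining two bounds we need \(K\) itself. Trivially \(e(J)\le N^2/4\) because \(J\) is bipartite, so \(K\ge 4\ge1\). Also \(N\le n\), so
\[
        K=\frac{N}{N/K}\le \frac{n}{n/(2k)}=2k.
\]
Then
\[
        \frac{N^2}{K^3}=\frac{N^2}{K^2}\cdot\frac1K\ge \frac{n^2}{4k^2}\cdot\frac{1}{2k}=\frac{n^2}{8k^3}.
\]
Finally,
\[
        \frac{K^3}{N}=\frac{K^2}{N/K}\le \frac{4k^2}{n/(2k)}=\frac{8k^3}{n}\le 8,
\]
using \(k\le n^{1/3}\).

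\textbf{Main obstacle.} The only real subtlety is choosing the peeling threshold so that \(e/N\), rather than \(e/N^2\), is the monotone invariant. That choice gives \(N/K\ge n/(2k)\) and the upper bound \(K\le 2k\) simultaneously. Everything else is arithmetic.
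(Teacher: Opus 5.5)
Your proof is correct and is essentially the paper's argument with the two steps in reverse order. The paper first passes to a subgraph maximizing \(e/v\), which gives minimum degree at least \(e/v\), and then takes a maximum cut, using that every vertex keeps at least half its degree across the cut; you cut first and then peel vertices of degree below half the average degree, with \(e/v\) nondecreasing, which yields the same constants, and your bound \(K\le 2k\) gives the three consequences slightly more directly.
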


\begin{proof}
Choose a non-empty subgraph \(G'\subseteq G\), not necessarily induced, maximizing \(e(X)/v(X)\) among all non-empty subgraphs \(X\subseteq G\).  Put \(N=v(G')\) and \(M=e(G')\).  Since \(G\) itself is available,
\[
        \frac{M}{N}\ge \frac{e(G)}{n}\ge \frac{n}{k}.
\]
The maximality of \(M/N\) implies that every vertex of \(G'\) has degree at least \(M/N\): deleting a vertex of smaller degree would increase the ratio \(e/v\).

Let \(J\) be the bipartite graph induced by a maximum cut of \(G'\).  Then \(e(J)\ge M/2\).  In a maximum cut, every vertex sends at least half of its \(G'\)-degree across the cut; otherwise moving that vertex to the other side would increase the cut.  Thus
\[
        \delta_{\min}(J)\ge \frac{M}{2N}.
\]
Writing \(e(J)=N^2/K\) and using \(e(J)\le M\), we get
\[
        \delta_{\min}(J)
        \ge \frac{M}{2N}
        \ge \frac{e(J)}{2N}
        =\frac{N}{2K}.
\]
Also
\[
        \frac{N}{K}=\frac{e(J)}{N}
        \ge \frac{M}{2N}
        \ge \frac{n}{2k}.
\]
The first displayed consequence is immediate.  Moreover \(1/K\ge n/(2Nk)\), and hence
\[
        \frac{N^2}{K^3}
        =\frac{N^2}{K^2}\cdot\frac1K
        \ge \frac{n^2}{4k^2}\cdot\frac{n}{2Nk}
        \ge \frac{n^2}{8k^3},
\]
because \(N\le n\).  Finally, \(K\le 2Nk/n\), so
\[
        \frac{K^3}{N}
        \le 8\frac{N^2k^3}{n^3}
        \le 8\frac{k^3}{n}
        \le 8.
\]
\end{proof}

For the rest of this section, fix \(J=(A,B;E)\) as in \cref{lem:normalization-core}, and relabel the two sides if necessary so that \(|A|\ge |B|\).  Then \(|A|\ge N/2\).  Put
\[
        L=\frac{N}{100K^2}.
\]
Since \(J\) is bipartite and simple, \(K\ge4\).  Also \(N/K\ge n/(2k)\) and \(K\ge4\) imply \(N\to\infty\).  Together with \(K^3\le 8N\), this gives \(K\le(8N)^{1/3}\), and hence
\[
        L=\frac{N}{100K^2}\ge \frac{N^{1/3}}{400}\to\infty.
\]
In particular, throughout the proof we may assume \(L\ge10\).

For \(w\in A\), define
\[
        B_w=N_J(w)
\]
and
\[
        A_w=\left\{a\in A\setminus\{w\}: |N_J(a)\cap B_w|\ge L\right\}.
\]
For \(a\in A_w\), write
\[
        X_a=N_J(a)\cap B_w.
\]
Let
\[
        F_w=J[A_w,B_w].
\]
Then \(e(F_w)=\sum_{a\in A_w}|X_a|\).

\begin{lemma}[Local edge count]\label{lem:local-core}
For all sufficiently large \(n\) and every \(w\in A\), one has
\[
        e(F_w)\ge c_0\frac{N^2}{K^2}
\]
for an absolute constant \(c_0>0\).  Moreover,
\[
        \frac{e(F_w)}{|B_w|}\ge \frac{N}{4K}
\]
\end{lemma}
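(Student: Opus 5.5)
Fix \(w\in A\). The plan is to count the edges of \(J[A\setminus\{w\},B_w]\) and then show that vertices outside \(A_w\) absorb only a small fraction of them.

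\textbf{Step 1: total count.} Every \(b\in B_w\) has \(\deg_J(b)\ge N/(2K)\) by \cref{lem:normalization-core}. Of these neighbours, only \(w\) itself is excluded. Hence
\[
        e\bigl(J[A\setminus\{w\},B_w]\bigr)\ge |B_w|\Bigl(\frac{N}{2K}-1\Bigr).
\]

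\textbf{Step 2: discard low-degree vertices.} Every \(a\in A\setminus(A_w\cup\{w\})\) has fewer than \(L\) neighbours in \(B_w\). These vertices therefore contribute at most \(|A|\,L\le N\cdot N/(100K^2)\) edges. Since \(|B_w|=\deg_J(w)\ge N/(2K)\), this gives
\[
        N\cdot\frac{N}{100K^2}=\frac{N}{50K}\cdot\frac{N}{2K}\le \frac{N}{50K}|B_w|.
\]
Combining with Step 1,
\[
        e(F_w)\ge |B_w|\Bigl(\frac{N}{2K}-1-\frac{N}{50K}\Bigr).
\]

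\textbf{Step 3: lower-order term.} It remains to check that the \(-1\) is negligible. We have \(N/K\ge n/(2k)\ge n^{2/3}/2\to\infty\), so \(1\le N/(100K)\) for large \(n\). The bracket is then at least \(N/(2K)-3N/(100K)\ge N/(4K)\), which proves
\[
        \frac{e(F_w)}{|B_w|}\ge \frac{N}{4K}.
\]

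\textbf{Step 4: first bound.} Using \(|B_w|\ge N/(2K)\) again gives \(e(F_w)\ge N^2/(8K^2)\), so \(c_0=1/8\) works.

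There is no real obstacle here. The only point needing care is that the absolute bound \(|A|\,L\) on the discarded edges is compared with \(|B_w|\,N/K\), which is possible because the minimum-degree bound gives \(|B_w|\ge N/(2K)\). The constant \(100\) in the definition of \(L\) makes this comparison comfortable.
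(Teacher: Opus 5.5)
Your proof is correct and follows essentially the same route as the paper. You bound \(e_J(A\setminus\{w\},B_w)\) from below via the minimum degree at each \(b\in B_w\) (losing only the edge to \(w\)), subtract the fewer than \(NL\) edges from vertices outside \(A_w\), and use \(|B_w|\ge N/(2K)\) to compare \(NL\) with \(|B_w|N/K\). The paper instead absorbs the \(-1\) into \(N/(3K)\) first and proves the two bounds in parallel with \(c_0=1/10\), whereas you derive the first bound from the second and get \(c_0=1/8\).
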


\begin{proof}
Every vertex \(b\in B_w\) has degree at least \(N/(2K)\) in \(J\).  One of its neighbours is \(w\), so \(b\) has at least \(N/(2K)-1\) neighbours in \(A\setminus\{w\}\).  Since \(N/K\ge n/(2k)\to\infty\), this is at least \(N/(3K)\) for all sufficiently large \(n\).  Hence
\[
        e_J(A\setminus\{w\},B_w)\ge |B_w|\frac{N}{3K}.
\]
Deleting from \(A\setminus\{w\}\) the vertices with fewer than \(L\) neighbours in \(B_w\) removes fewer than \(NL\) edges.  Therefore
\[
        e(F_w)\ge |B_w|\frac{N}{3K}-NL.
\]
Since \(|B_w|=d_J(w)\ge N/(2K)\) and \(L=N/(100K^2)\),
\[
        e(F_w)
        \ge \frac{N^2}{6K^2}-\frac{N^2}{100K^2}
        \ge c_0\frac{N^2}{K^2},
\]
for example with \(c_0=1/10\).  Dividing the earlier estimate by \(|B_w|\) gives
\[
        \frac{e(F_w)}{|B_w|}
        \ge \frac{N}{3K}-\frac{NL}{|B_w|}
        \ge \frac{N}{3K}-\frac{N}{50K}
        \ge \frac{N}{4K}.
\]
This proves both assertions.
\end{proof}

\subsection{Bad pairs and a weighted hitting lemma}

Fix the absolute constant
\[
        T=100.
\]
For \(u,v\in B\), call the pair \(\{u,v\}\) \emph{bad} if \(\codeg_J(u,v)<T\), and \emph{good} otherwise.  For \(w\in A\), let \(R_w\) be the graph on vertex set \(B_w\) whose edges are precisely the bad pairs contained in \(B_w\).  Define
\[
        W(w)=\sum_{uv\in E(R_w)}(d_J(u)+d_J(v)).
\]

\begin{lemma}[A centre with few weighted bad pairs]\label{lem:good-centre-core}
There exists \(w\in A\) such that
\[
        W(w)\le C_1\frac{N^2}{K},
\]
where \(C_1\) is an absolute constant.
\end{lemma}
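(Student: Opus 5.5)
Average \(W(w)\) over \(w\in A\) and show the average is \(O(N^2/K)\); some \(w\) then attains at most the average. Swapping the order of summation, each bad pair \(\{u,v\}\subseteq B\) contributes its weight \(d_J(u)+d_J(v)\) once for every centre \(w\) adjacent to both \(u\) and \(v\), i.e.\ \(\codeg_J(u,v)\) times. Hence
\[
        \sum_{w\in A}W(w)
        =\sum_{\substack{\{u,v\}\subseteq B\\ \text{bad}}}\codeg_J(u,v)\,\bigl(d_J(u)+d_J(v)\bigr)
        <T\sum_{\{u,v\}\subseteq B}\bigl(d_J(u)+d_J(v)\bigr)\cdot\mathbf 1\bigl[\codeg_J(u,v)\ge1\bigr],
\]
using \(\codeg_J(u,v)<T\) for bad pairs. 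Pairs with codegree \(0\) contribute nothing, which is what keeps the sum manageable.

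Next I bound \(\sum_{\{u,v\}}(d_J(u)+d_J(v))\mathbf 1[\codeg\ge1]\) by \(\sum_{u\in B}d_J(u)\cdot|\{v: \codeg_J(u,v)\ge1\}|\) (counting each ordered pair, up to a factor \(2\)). The number of \(v\) at distance two from \(u\) is at most \(\sum_{a\in N_J(u)}d_J(a)\). This gives
\[
        \sum_{w}W(w)\le 2T\sum_{u\in B}d_J(u)\sum_{a\in N(u)}d_J(a)=2T\sum_{ua\in E(J)}d_J(u)d_J(a).
\]

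The main obstacle is that this quantity need not be \(O(N^3/K)\), which is what averaging over \(|A|\ge N/2\) requires, because there is no upper bound on degrees. The simplest fallback is the cruder bound \(\sum_{w}W(w)\le T\sum_{\{u,v\}\subseteq B}(d_J(u)+d_J(v))\le T|B|\sum_{u\in B}d_J(u)=T|B|\,e(J)\le TN\cdot N^2/K\). This works directly without the codegree-zero trick. Dividing by \(|A|\ge N/2\) gives an average of at most \(2TN^2/K\), so \(C_1=2T=200\) suffices.

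So the proof will use only the two facts \(\codeg_J<T\) on bad pairs and \(\sum_{u\in B}d_J(u)=e(J)=N^2/K\), together with \(|B|\le N\) and \(|A|\ge N/2\). Once the double count is written out, no step is delicate; the only point to check is that the factor \(\codeg_J(u,v)\) from interchanging the sums is exactly what the bad-pair threshold \(T\) controls.
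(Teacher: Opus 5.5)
Your final argument is correct and is essentially the paper's proof. You count each bad pair once per common neighbour, hence fewer than \(T\) times, which gives \(\sum_{w\in A}W(w)\le T(|B|-1)e(J)\le TN^3/K\). Averaging over \(|A|\ge N/2\) then gives \(C_1=2T\). The codegree-zero/distance-two detour contributes nothing and can simply be deleted.
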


\begin{proof}
A fixed bad pair \(\{u,v\}\subseteq B\) is contained in \(B_w\) precisely for those \(w\in N_J(u)\cap N_J(v)\).  Since the pair is bad, there are fewer than \(T\) such vertices \(w\).  Therefore
\[
        \sum_{w\in A}W(w)
        \le T\sum_{\{u,v\}\subseteq B}(d_J(u)+d_J(v)).
\]
The latter sum equals \((|B|-1)\sum_{u\in B}d_J(u)\), and is at most
\[
        Ne(J)=\frac{N^3}{K}.
\]
Since \(|A|\ge N/2\), some \(w\in A\) satisfies
\[
        W(w)\le 2T\frac{N^2}{K}.
\]
\end{proof}

\begin{lemma}[Weighted independent hitting]\label{lem:hitting-core}
Let \(R\) be a graph on a finite vertex set \(U\).  Let \(\mathcal X\) be a finite indexed family of subsets of \(U\), repetitions allowed, each of size at least \(L\).  Give each indexed member \(X\in\mathcal X\) weight \(|X|\), and put
\[
        M=\sum_{X\in\mathcal X}|X|.
\]
For \(u\in U\), write \(r(u)=d_R(u)\).  Suppose that, for some \(D\ge1\),
\[
        \sum_{X\in\mathcal X}\sum_{u\in X}r(u)\le DM.
\]
If \(L/D\ge\eta>0\), then \(R\) has an independent set \(I\) such that
\[
        \sum_{\substack{X\in\mathcal X\\X\cap I\ne\varnothing}}|X|
        \ge \zeta(\eta)M,
\]
where \(\zeta(\eta)>0\) depends only on \(\eta\).
\end{lemma}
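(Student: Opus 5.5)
We prove the lemma by a randomized selection followed by deletion, in the style of the probabilistic proof of Tur\'an's theorem. Fix a parameter \(p\in(0,1]\) to be chosen as a function of \(\eta\) alone, roughly \(p=\min\{1,\,c/L\}\) for a small absolute constant \(c\). Let \(S\subseteq U\) contain each vertex independently with probability \(p\). Form \(I\) from \(S\) by discarding every \(u\in S\) that has an \(R\)-neighbour in \(S\). Then \(I\) is independent in \(R\), and for each \(u\) we have
\[
\Pr(u\in I)=p(1-p)^{r(u)}.
\]

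For a fixed member \(X\in\mathcal X\), we lower-bound \(\Pr(X\cap I\ne\varnothing)\) using only the vertices of \(X\) with small \(R\)-degree. Call \(u\in X\) \emph{light} if \(r(u)\le 2D\,|X|^{-1}\sum_{v\in X}r(v)\), or more simply measure lightness against \(D\). By Markov's inequality applied to the weighted average, a member \(X\) is \emph{typical} if \(\sum_{u\in X}r(u)\le 4D|X|\). The hypothesis \(\sum_X\sum_{u\in X}r(u)\le DM\) shows that members failing this carry total weight at most \(M/4\). For a typical \(X\), at least \(|X|/2\) of its vertices satisfy \(r(u)\le 8D\). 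Fix a set \(Y\subseteq X\) of such light vertices with \(|Y|=\min\{\lceil |X|/2\rceil,\lceil 1/p\rceil\}\).

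The key step is a second-moment (Janson/inclusion--exclusion) bound. Let \(Z=\sum_{u\in Y}\mathbf 1[u\in I]\). Then
\[
\mathbb E Z\ge |Y|p(1-p)^{8D}.
\]
By Bonferroni,
\[
\Pr(Z\ge1)\ge \mathbb E Z-\sum_{\{u,v\}\subseteq Y}\Pr(u,v\in S)\ge |Y|p(1-p)^{8D}-\tbinom{|Y|}{2}p^2.
\]
Take \(p=1/\max(L,8D)\), so that \(p\le \eta'/D\)-type bounds keep \((1-p)^{8D}\ge e^{-O(\max(1,1/\eta))}\). Because \(|X|\ge L\ge \eta D\), we get \(|Y|p\) of order at least \(\min(1,\eta)\), and it is capped at about \(1\). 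Choosing \(|Y|\) as the appropriate constant fraction of \(1/p\) then makes the quadratic term at most half the linear term. This yields
\[
\Pr(X\cap I\ne\varnothing)\ge \zeta_0(\eta)>0
\]
for every typical \(X\).

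Summing with weights gives
\[
\mathbb E\sum_{X\cap I\ne\varnothing}|X|\ge \zeta_0(\eta)\cdot\tfrac34 M,
\]
so some outcome achieves this, and we may take \(\zeta(\eta)=\tfrac34\zeta_0(\eta)\).

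The main obstacle is calibrating \(p\) against \(D\). If \(p\) is too large, light vertices are killed by their up to \(8D\) neighbours. If \(p\) is too small, the fixed-size set \(X\) (of size \(\ge L\)) is missed entirely. The hypothesis \(L/D\ge\eta\) is exactly what reconciles these two requirements: with \(p\asymp 1/D\), the set \(X\) contains \(\gtrsim\eta/p\) light vertices. We would check carefully the regime \(D\) small, where \(p\) may be capped at a constant less than \(1\), for example \(p=1/2\) whenever \(8D\le 2\).
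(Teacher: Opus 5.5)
Your proposal is correct and is essentially the paper's argument: Markov's inequality to discard members of large average \(R\)-degree, random selection at rate \(\asymp 1/D\) with deletion of conflicting vertices, a uniform lower bound \(\zeta_0(\eta)\) on the probability that each surviving member is hit, and weighted averaging. The only difference is the per-member estimate. The paper applies Jensen over all of \(X\) and then uses \(\Pr(Z_X>0)\ge(\mathbb E Z_X)^2/\mathbb E|S\cap X|^2\) with \(\mathbb E|S\cap X|^2\le\mu+\mu^2\), which needs no truncation. Your Bonferroni route does need \(|Y|\) capped at about \(1/(4p)\) rather than \(\lceil 1/p\rceil\), as you observe; also, \(D\ge1\) already forces \(p\le 1/8\), so the small-\(D\) regime you flag never arises.
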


\begin{proof}
If \(M=0\), the assertion is trivial.  Assume \(M>0\).  Call \(X\in\mathcal X\) regular if
\[
        \frac1{|X|}\sum_{u\in X}r(u)\le2D.
\]
By Markov's inequality in weighted form, the non-regular members have total weight at most \(M/2\).

Choose each vertex \(u\in U\) independently with probability
\[
        p=\frac1{4D},
\]
and then keep a selected vertex only if none of its \(R\)-neighbours was selected.  The kept set \(I\) is independent.

Fix a regular set \(X\), and let \(Z_X=|I\cap X|\).  For \(u\in X\),
\[
        \Pr(u\in I)=p(1-p)^{r(u)}.
\]
The function \(x\mapsto(1-p)^x\) is convex, and the average of \(r(u)\) over \(u\in X\) is at most \(2D\).  Thus
\[
        \mathbb E Z_X
        =\sum_{u\in X}p(1-p)^{r(u)}
        \ge p|X|(1-p)^{2D}
        \ge c\frac{|X|}{D}
        \ge c\eta,
\]
for an absolute constant \(c>0\).  Let \(Y_X\) be the number of initially selected vertices of \(X\), and put \(\mu=p|X|\).  Then \(Z_X\le Y_X\),
\[
        \mathbb E Y_X^2 \le \mu+\mu^2,
\]
and \(\mathbb E Z_X\ge c'\mu\) for an absolute constant \(c'>0\).  Since \(\mu\ge\eta/4\), the second-moment inequality gives
\[
        \Pr(Z_X>0)
        \ge \frac{(\mathbb E Z_X)^2}{\mathbb E Z_X^2}
        \ge \frac{(\mathbb E Z_X)^2}{\mathbb E Y_X^2}
        \ge \zeta_0(\eta)>0.
\]
Therefore the expected total weight of regular members hit by \(I\) is at least \(\zeta_0(\eta)\) times the total weight of the regular members, and hence at least \(\zeta_0(\eta)M/2\).  Taking \(\zeta(\eta)=\zeta_0(\eta)/2\) proves the lemma.
\end{proof}

\subsection{Selecting anchors}

Fix a vertex \(w\in A\) satisfying \cref{lem:good-centre-core}.  Work with \(F_w=J[A_w,B_w]\) and \(R_w\).  Apply \cref{lem:hitting-core} with
\[
        U=B_w,
        \qquad
        R=R_w,
        \qquad
        \mathcal X=(X_a)_{a\in A_w},
\]
Here \(\mathcal X\) is indexed by \(A_w\), so identical sets \(X_a\) are counted with multiplicity.  Then
\[
        M=\sum_{a\in A_w}|X_a|=e(F_w)\ge c_0\frac{N^2}{K^2}
\]
by \cref{lem:local-core}.  Moreover,
\begin{align*}
        \sum_{a\in A_w}\sum_{b\in X_a}d_{R_w}(b)
        &=\sum_{b\in B_w}d_{F_w}(b)d_{R_w}(b) \\
        &=\sum_{bb'\in E(R_w)}(d_{F_w}(b)+d_{F_w}(b')) \\
        &\le \sum_{bb'\in E(R_w)}(d_J(b)+d_J(b')) \\
        &=W(w)
        \le C_1\frac{N^2}{K}.
\end{align*}
Set
\[
        D=\max\left\{1,\frac{C_1N^2}{K M}\right\}.
\]
Then the hypothesis of \cref{lem:hitting-core} holds.  Since \(M\ge c_0N^2/K^2\) and \(K\ge4\), we have \(D\le C_2K\) for an absolute constant \(C_2\).  Hence
\[
        \frac{L}{D}\ge c\frac{N}{K^3}\ge c'>0
\]
by \cref{lem:normalization-core}.  Therefore there is an independent set \(I\subseteq B_w\) in \(R_w\) such that, with
\[
        A_8=\{a\in A_w:X_a\cap I\ne\varnothing\},
\]
one has
\[
        \sum_{a\in A_8}|X_a|
        \ge c_3\frac{N^2}{K^2}.
\]
For every \(a\in A_8\), choose and fix one anchor
\[
        \sigma(a)\in X_a\cap I,
\]
and set
\[
        S=\{\sigma(a):a\in A_8\}\subseteq I.
\]
Since \(I\) is independent in \(R_w\), every two distinct vertices \(s,t\in S\) form a good pair.  Hence
\[
        \codeg_J(s,t)\ge T=100
        \qquad(s,t\in S,\ s\ne t).
\]
In particular, for every distinct \(s,t\in S\) and every set \(Z\) of at most \(20\) vertices, there is a vertex
\[
        y\in (N_J(s)\cap N_J(t))\setminus(Z\cup\{w\}).
\]
Such a vertex belongs to
\[
        Y=\{y\in A\setminus\{w\}: |N_J(y)\cap S|\ge2\}.
\]

\subsection{The deterministic routing lemmas}

The next two lemmas are purely deterministic.  They convert the anchor structure into internal short cycles.

\begin{lemma}[Same-anchor petals]\label{lem:same-anchor-petals}
Let \(J=(A,B;E)\) be bipartite.  Let \(w\in A\), \(s\in B\), and let \(P\subseteq A\setminus\{w\}\).  For each \(a\in P\), put \(X_a=N_J(a)\cap N_J(w)\), and suppose that \(s\in X_a\) and \(|X_a|\ge2\).  Put \(B^*=\bigcup_{a\in P}X_a\), and let
\[
        E(H)=E_J(P,B^*)\cup\{wb:b\in B^*\}.
\]
Then every two distinct edges of \(H\) lie together on a cycle of length at most \(6\) contained in \(H\).
\end{lemma}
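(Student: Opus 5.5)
The plan is a direct case analysis on the types of the two edges, exhibiting an explicit cycle in each case. First I record the structure of \(H\). Since \(B^*\subseteq N_J(w)\), an edge \(ab\in E_J(P,B^*)\) with \(a\in P\) satisfies \(b\in N_J(a)\cap N_J(w)=X_a\). So \(H\) has two kinds of edges: \emph{spokes} \(wb\) with \(b\in B^*\), and \emph{petal edges} \(ab\) with \(a\in P\) and \(b\in X_a\). Three facts give all the routing:
\begin{itemize}
\item every \(a\in P\) is joined in \(H\) to \(s\);
\item every \(b\in X_a\) is joined to both \(a\) and \(w\);
\item \(|X_a|\ge2\), so each \(a\) has a second petal vertex \(c\in X_a\setminus\{s\}\).
\end{itemize}
Every cycle I write down uses only spokes and petal edges, so it lies in \(H\). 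Distinctness of its vertices will follow from \(w\notin P\), from bipartiteness (\(A\)-vertices differ from \(B\)-vertices), and from the case assumptions.

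Two spokes \(wb,wb'\), with \(b\in X_a\) and \(b'\in X_{a'}\):
\begin{itemize}
\item If a single \(a\) has \(b,b'\in X_a\), use the \(4\)-cycle \(w\,b\,a\,b'\,w\).
\item Otherwise \(a\ne a'\). If neither of \(b,b'\) is \(s\), use the \(6\)-cycle \(w\,b\,a\,s\,a'\,b'\,w\).
\item If \(b=s\), then \(s,b'\in X_{a'}\), which is the first subcase.
\end{itemize}

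A spoke \(wb\) and a petal edge \(ab'\):
\begin{itemize}
\item If \(b\in X_a\), use \(w\,b\,a\,c\,w\). Here \(c=b'\) if \(b'\ne b\), and otherwise \(c\) is any other element of \(X_a\), which exists since \(|X_a|\ge2\).
\item If \(b\notin X_a\), pick \(a'\) with \(b\in X_{a'}\). Then \(b\ne s\) and \(b\ne b'\). If \(b'\ne s\), use \(w\,b\,a'\,s\,a\,b'\,w\).
\item If \(b'=s\), use \(w\,b\,a'\,s\,a\,c\,w\) with \(c\in X_a\setminus\{s\}\); note \(c\ne b\) because \(b\notin X_a\).
\end{itemize}

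Two petal edges \(ab,a'b'\):
\begin{itemize}
\item If \(a=a'\), use \(w\,b\,a\,b'\,w\).
\item If \(a\ne a'\) and \(b=b'\ne s\), use the \(4\)-cycle \(a\,b\,a'\,s\,a\).
\item If \(a\ne a'\) and \(b=b'=s\), take \(c\in X_a\setminus\{s\}\) and \(c'\in X_{a'}\setminus\{s\}\). Use \(a\,s\,a'\,c\,a\) when \(c=c'\), and \(a\,s\,a'\,c'\,w\,c\,a\) otherwise.
\item If \(a\ne a'\) and \(b,b'\ne s\) are distinct, use \(a\,b\,w\,b'\,a'\,s\,a\).
\item If \(a\ne a'\), \(b=s\) and \(b'\ne s\): when \(b'\in X_a\) use \(a\,s\,a'\,b'\,a\); otherwise use \(a\,s\,a'\,b'\,w\,c\,a\) with \(c\in X_a\setminus\{s\}\), where \(c\ne b'\) automatically. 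The case \(b'=s\), \(b\ne s\) is symmetric.
\end{itemize}

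Every cycle above has length \(4\) or \(6\) and contains both given edges, which proves the lemma. The main obstacle is purely bookkeeping: in the degenerate subcases, where an edge passes through \(s\) or two edges share a vertex, the default \(6\)-cycle repeats a vertex. Each time I repair it either with a shortcut \(4\)-cycle or with the second petal vertex \(c\) guaranteed by \(|X_a|\ge2\). I will check each listed cycle for vertex-distinctness explicitly.
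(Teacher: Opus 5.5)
Your proof is correct and complete. Every cycle you list lies in \(H\), and the distinctness claims follow from \(w\notin P\), bipartiteness, and your case hypotheses. Your cycles are the same ones the paper uses: each \(6\)-cycle is the union of two \(w\)--\(s\) paths of the form \(w\,b\,a\,s\) with the edge \(ws\) left out, and each \(4\)-cycle is the shortcut that appears when two such paths overlap.

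The difference is in how the case analysis is organized. The paper first notes that every edge of \(H\) lies on some petal \(w-b-a-s-w\). It then only has to study how two petals intersect. By bipartiteness, their internal vertices can coincide only at the \(B\)-vertex or at the \(A\)-vertex. That leaves three cases: internally disjoint, a common \(B\)-vertex, or a common \(A\)-vertex. There are also two trivial cases, where one chosen edge is \(ws\) or both chosen edges lie on one petal.

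This petal-overlap reduction is shorter, and the paper reuses it almost verbatim in the connector lemma (\cref{lem:petal-routing}). Your split by edge type (spoke/spoke, spoke/petal, petal/petal) avoids the petal-covering step and makes each vertex-distinctness check explicit. The cost is more subcases, in which you are effectively choosing the two petals by hand each time.
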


\begin{proof}
For every \(a\in P\) and every \(b\in X_a\setminus\{s\}\), the four edges
\[
        wb,
        \quad ba,
        \quad as,
        \quad sw
\]
form the petal \(w-b-a-s-w\).  Every edge of \(H\) lies on at least one petal: for \(ab\) with \(b\ne s\), use \(w-b-a-s-w\); for \(as\), choose another \(b\in X_a\setminus\{s\}\); and for a star edge \(wb\), choose \(a\in P\) with \(b\in X_a\), using another vertex of \(X_a\) if \(b=s\).

It remains to show that any two petal edges lie on a cycle of length at most \(6\).  Consider two petals, equivalently two \(w\)-to-\(s\) paths of length \(3\),
\[
        w-b-a-s,
        \qquad
        w-d-c-s,
\]
together with the common edge \(ws\).  If one chosen edge is \(ws\), one of the two petals contains both chosen edges.  If the two chosen edges lie on the same length-three path, that path together with \(ws\) gives a \(4\)-cycle.  Otherwise the chosen edges lie on two length-three paths.  If the two paths are internally disjoint, their union is a \(6\)-cycle.  If they have a common internal \(B\)-vertex, the cycle through that \(B\)-vertex, the two internal \(A\)-vertices, and \(s\) has length \(4\), unless one of the chosen edges is the common edge incident with \(w\), in which case one of the original petals contains both chosen edges.  If they have a common internal \(A\)-vertex, the analogous \(4\)-cycle through \(w\) and the two internal \(B\)-vertices works, unless one of the chosen edges is the common edge incident with \(s\), in which case one of the original petals contains both chosen edges.  This covers all cases.
\end{proof}

\begin{lemma}[Petal routing with connectors]\label{lem:petal-routing}
Let \(J=(A,B;E)\) be bipartite.  Let \(w\in A\), let \(S\subseteq N_J(w)\), and let \(P\subseteq A\setminus\{w\}\).  For each \(a\in P\), put \(X_a=N_J(a)\cap N_J(w)\), suppose that \(|X_a|\ge2\), and choose an anchor \(\sigma(a)\in X_a\cap S\).  Put
\[
        B^*=\bigcup_{a\in P}X_a,
        \qquad
        Y=\{y\in A\setminus\{w\}: |N_J(y)\cap S|\ge2\}.
\]
Assume that \(S\subseteq B^*\), and that for every distinct \(s,t\in S\) and every set \(Z\) of at most \(20\) vertices, there is a vertex \(y\in Y\setminus Z\) adjacent to both \(s\) and \(t\).  Let \(H\) be the subgraph with edge set
\[
        E(H)=E_J(P,B^*)\cup\{wb:b\in B^*\}\cup E_J(Y,S).
\]
Then every two distinct edges of \(H\) lie together on a cycle of length at most \(8\) contained in \(H\).
\end{lemma}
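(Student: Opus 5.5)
The approach is to reduce every edge of \(H\) to a short \(w\)-to-\(S\) route containing it, and then to close up two such routes either directly at a common anchor or through a fresh connector vertex from \(Y\).

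\textbf{Routes.} For every edge \(e\in E(H)\) I fix a path \(Q_e\) in \(H\) that starts at \(w\), ends at some anchor \(s_e\in S\), has length at most \(3\), and contains \(e\). The routes are chosen as follows.
\begin{enumerate}[label=\textup{(\alph*)}]
\item For a petal edge \(ab\) with \(a\in P\), \(b\in X_a\setminus\{\sigma(a)\}\), take \(w-b-a-\sigma(a)\).
\item For \(a\sigma(a)\), take \(w-b'-a-\sigma(a)\) for some \(b'\in X_a\setminus\{\sigma(a)\}\), using \(|X_a|\ge2\).
\item For a star edge \(wb\) with \(b\in S\), the one-edge path \(wb\) suffices. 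Otherwise pick \(a\in P\) with \(b\in X_a\) and use (a).
\item For an edge \(ab\) with \(a\in P\) and \(b\in X_a\cap S\), \(b\neq\sigma(a)\), the path \(w-b-a-\sigma(a)\) still works, or one can simply end at \(b\) with the path \(w-b\)\ldots; the same applies below.
\item For a connector edge \(ys\) with \(y\in Y\), \(s\in S\), pick \(t\in N_J(y)\cap S\setminus\{s\}\) and take \(w-t-y-s\). Here \(wt\in H\) because \(S\subseteq B^*\).
\end{enumerate}
All these paths are paths in \(H\), and every vertex lies in \(\{w\}\cup A\cup B\) with the bipartite parity fixed. In (e) there is the extra freedom that the route may be read as ending at \(t\) instead, namely \(w-s-y-t\), and I will use this freedom in the case analysis.

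\textbf{Combining two routes.} Given distinct edges \(e,f\), first suppose \(s_e\ne s_f\). Let \(Z\) be the vertex set of \(Q_e\cup Q_f\), which has at most \(8\le 20\) vertices. The hypothesis gives \(y\in Y\setminus Z\) adjacent to both \(s_e\) and \(s_f\), and the edges \(ys_e,ys_f\) lie in \(E_J(Y,S)\subseteq E(H)\). The closed walk \(Q_e\), then \(s_e-y-s_f\), then \(Q_f\) reversed, has length at most \(8\). If \(Q_e\) and \(Q_f\) meet only at \(w\), it is a cycle containing \(e\) and \(f\).

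If instead \(s_e=s_f=s\), the two routes are \(w\)-to-\(s\) paths of length at most \(3\). If one of them has length \(1\) or \(3\), I would argue exactly as in \cref{lem:same-anchor-petals}, using the edge \(ws\) to close a single route into a \(C_4\) when needed. Routes of length \(3\) from \(w\) to \(s\) are closed by \(ws\), since bipartite parity forces odd length. This yields a cycle of length at most \(6\).

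\textbf{Main obstacle: overlapping routes.} The expected difficulty is the case where \(Q_e\) and \(Q_f\) share an internal vertex \(v\). I would split by where \(e\) and \(f\) sit relative to \(v\).
\begin{itemize}
\item If both lie on the \(w\)-to-\(v\) segments, the cycle \(w\xrightarrow{Q_e}v\xrightarrow{Q_f}w\) has length at most \(4\), unless the two segments coincide. In that case one route, closed at its anchor by \(ws_e\), contains both edges.
\item If both lie on the \(v\)-to-anchor segments, I route \(v\xrightarrow{Q_e}s_e-y-s_f\xrightarrow{Q_f}v\). This has length at most \(8\), or is replaced by the same-anchor \(C_4\) argument when \(s_e=s_f\).
\item In the mixed case, where \(e\) lies before \(v\) and \(f\) lies after it, I replace \(Q_f\) by the hybrid route consisting of the \(w\)-to-\(v\) part of \(Q_e\) followed by the \(v\)-to-\(s_f\) part of \(Q_f\). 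This is again a \(w\)-to-\(S\) path of length at most \(3\) containing both \(e\) and \(f\), and closing it (through \(ws_f\), or through a connector to \(s_e\)) gives the required cycle.
\end{itemize}
Shared vertices are at most two per route, so this finite case check, using the flexibility in choosing \(b'\), \(a\), \(t\) and the orientation of connector routes, should cover everything. I expect the careful bookkeeping here to be the bulk of the proof. The lower bound \(|Z|\le20\) leaves ample room to avoid all route vertices when choosing \(y\).
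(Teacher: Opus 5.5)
\textbf{Verdict: there is a genuine gap in the overlapping-route case.}

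The parts you actually carry out are correct. Every edge of \(H\) lies on a \(w\)-to-\(S\) route of length at most \(3\); routing a connector edge \(ys\) through the \(4\)-cycle \(w-t-y-s-w\) is a nice touch, since it makes every edge an edge of a generalized petal. The same-anchor case follows from the argument of \cref{lem:same-anchor-petals}, which never uses \(a\in P\). Routes meeting only at \(w\) close up through a fresh \(y\).

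The gap is the overlapping-route case. That is where all of the content of the lemma lies, you only sketch it, and the sketch fails.
\begin{enumerate}[label=\textup{(\roman*)}]
\item The hybrid route need not have length at most \(3\). For \(Q_e=w-s_f-v_2-s_e\) and \(Q_f=w-u_1-u_2-s_f\) with no further coincidence, the hybrid at \(v=s_f\) is \(w-u_1-u_2-s_f-v_2-s_e\), of length \(5\). Closing it by \(ws_e\) happens to give a \(C_6\), so this is not fatal.
\item The serious failure is the crossed configuration \(Q_e=w-s_f-v_2-s_e\), \(Q_f=w-s_e-u_2-s_f\) with \(v_2\ne u_2\). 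This is exactly what rule (a) prescribes for \(e=v_2s_f\) with \(\sigma(v_2)=s_e\) and \(f=u_2s_e\) with \(\sigma(u_2)=s_f\). The only shared vertices besides \(w\) are the two anchors. For either choice \(v\in\{s_e,s_f\}\) the pair falls into your mixed case, and the hybrid walk \(w-s_e-u_2-s_f-v_2-s_e\) (respectively \(w-s_f-v_2-s_e-u_2-s_f\)) revisits an anchor. So your closing step produces no cycle. The cycle actually needed is \(s_e-v_2-s_f-u_2-s_e\). It comes from a different observation: \(s_e\) and \(s_f\) are joined by internally disjoint \(2\)-paths through \(w\), \(v_2\) and \(u_2\). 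This is the paper's separate case \(b=t\), \(d=s\).
\item When two routes share two vertices besides \(w\), your ``after \(v\)'' closure is a cycle only for a suitable choice of \(v\), which you leave unspecified.
\end{enumerate}

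To close the gap you need an explicit enumeration of coincidences between \(Q_e=w-b-a-s\) and \(Q_f=w-d-c-t\) with \(s\ne t\), as in the paper's ``two petal edges'' case:
\begin{enumerate}[label=\textup{(\roman*)}]
\item \(a=c\): both edges lie in the complete bipartite graph between \(\{w,a\}\) and \(\{b,d,s,t\}\), hence on a \(C_4\).
\item \(b=d\): use \(b-a-s-y-t-c-b\), unless the shared edge \(wb\) is chosen.
\item \(b=t\) and \(d=s\): the crossed case above.
\item \(b=t\), \(d\ne s\): use \(t-a-s-w-d-c-t\), unless \(e=wt\).
\item Otherwise: the \(C_8\) through a fresh \(y\).
\item The easy cases in which a chosen edge is a star edge \(ws\).
\end{enumerate}
Alternatively, the crossed case reduces to the same-anchor case by reading \(Q_e\) as \(w-s_e-v_2-s_f\). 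The aside in (d) points that way, but as written its path \(w-b\) does not contain \(ab\), and your overlap analysis never uses it.

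The paper's enumeration uses only that route edges and star edges \(wb\), \(b\in B^*\), lie in \(H\). It therefore applies verbatim to your generalized petals. Combined with your route reduction, it gives a proof that dispenses with the paper's separate petal--connector and connector--connector cases.
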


\begin{proof}
For \(a\in P\) and \(b\in X_a\setminus\{\sigma(a)\}\), call
\[
        w-b-a-\sigma(a)-w
\]
a petal.  As in the proof of \cref{lem:same-anchor-petals}, every edge in \(E_J(P,B^*)\cup\{wb:b\in B^*\}\) lies on at least one petal.  The added hypothesis \(S\subseteq B^*\) ensures in particular that every anchor-star edge \(ws\), \(s\in S\), is present in \(H\) and is a petal edge.  Also, every edge in \(E_J(Y,S)\) whose left endpoint belongs to \(P\) is a petal edge, because if \(a\in P\), \(s\in S\), and \(as\in E(J)\), then \(s\in X_a\).  Therefore the only remaining edges are connector edges \(ys\) with \(y\in Y\setminus P\) and \(s\in S\).  Such a connector edge lies on a length-two path between distinct anchors: there is \(t\in S\setminus\{s\}\) with \(yt\in E(H)\).

Throughout the proof, a ``fresh'' common-neighbour witness for two distinct anchors means a vertex supplied by the hypothesis while avoiding all vertices already named in the displayed cycle.  Fewer than \(20\) vertices are ever forbidden.  In the case analysis below, the stated inequalities handle possible coincidences among the named petal vertices, while freshness prevents collisions involving newly chosen witnesses.

\smallskip
\noindent\emph{Two petal edges.}
Let
\[
        C_1=w-b-a-s-w,
        \qquad
        C_2=w-d-c-t-w
\]
be two petals, where \(s,t\in S\), \(b\ne s\), and \(d\ne t\).  Let \(e\in E(C_1)\) and \(f\in E(C_2)\).  If \(s=t\), \cref{lem:same-anchor-petals} applied to these petals gives a cycle of length at most \(6\).  Assume \(s\ne t\), and choose a fresh vertex \(y\in Y\) adjacent to both \(s\) and \(t\), avoiding \(w,a,c\).

If \(e=ws\), then if \(f=wt\), the cycle \(w-s-y-t-w\) works.  If \(f\) lies on the path \(w-d-c-t\), then either \(d\ne s\), in which case
\[
        w-s-y-t-c-d-w
\]
is a cycle of length \(6\) containing both edges, or \(d=s\), in which case \(w-s-c-t-w\) is a \(4\)-cycle containing both.  The case \(f=wt\) is symmetric.  We may therefore assume that \(e\) lies on \(w-b-a-s\) and is not \(ws\), and that \(f\) lies on \(w-d-c-t\) and is not \(wt\).

If \(a=c\), then the two chosen edges lie in the complete bipartite graph between \(\{w,a\}\) and the distinct vertices among \(\{b,d,s,t\}\).  Since \(s\ne t\), this right-hand set has size at least two, and any two distinct edges in such a \(K_{2,r}\) lie on a \(4\)-cycle.  Hence assume \(a\ne c\).

If \(b=d\), then if one of the chosen edges is the common edge \(wb=wd\), one of the original petals contains both edges.  Otherwise
\[
        b-a-s-y-t-c-b
\]
is a \(6\)-cycle containing both.  If \(b=t\) and \(d=s\), then there are four internally disjoint \(s\)-to-\(t\) paths of length \(2\), through \(a\), through \(c\), through \(w\), and through \(y\).  The two selected edges lie on the paths through \(a,c,w\); combining one or two of those paths with another one gives a \(4\)-cycle containing both edges.

If \(b=t\) and \(d\ne s\), then if \(e=wb=wt\), the petal \(C_2\) contains both \(e\) and \(f\).  Otherwise
\[
        t-a-s-w-d-c-t
\]
is a \(6\)-cycle containing both.  The case \(d=s\) and \(b\ne t\) is symmetric.  In the remaining case \(b\ne d\), \(b\ne t\), and \(d\ne s\), the cycle
\[
        w-b-a-s-y-t-c-d-w
\]
has length \(8\) and contains both edges.

\smallskip
\noindent\emph{One petal edge and one connector edge.}
Let \(e\) be a petal edge with anchor \(u\in S\), and let \(ys\) be a connector edge with \(y\in Y\setminus P\).  Choose \(t\in S\setminus\{s\}\) with \(yt\in E(H)\).  Choose a petal containing \(e\).  If \(e=wu\), use the direct path \(w-u\); otherwise use a length-three path
\[
        w-b-a-u
\]
inside that petal containing \(e\).

First suppose \(e=wu\).  If \(u=s\), the cycle \(w-u-y-t-w\) works; if \(u=t\), use \(w-u-y-s-w\); and if \(u\notin\{s,t\}\), choose a fresh witness \(z\) adjacent to \(u\) and \(t\), and use
\[
        w-u-z-t-y-s-w.
\]
Now suppose \(e\) lies on \(w-b-a-u\).  If \(u=s\), then, unless \(b=t\), the cycle
\[
        w-b-a-u-y-t-w
\]
has length \(6\) and contains both edges.  If \(b=t\), then if \(e=wb=wt\), the cycle \(w-t-y-u-w\) works, while if \(e\ne wt\), the cycle \(t-a-u-y-t\) works.  The case \(u=t\) is symmetric.

It remains to consider \(u\notin\{s,t\}\).  If \(b\notin\{s,t\}\), choose a fresh vertex \(z\) adjacent to \(u\) and \(t\), and use
\[
        w-b-a-u-z-t-y-s-w.
\]
If \(b=s\), choose a fresh \(z\) adjacent to \(u\) and \(t\); if \(e=ws\), use
\[
        w-s-y-t-z-u-w,
\]
and otherwise use
\[
        s-a-u-z-t-y-s.
\]
If \(b=t\), choose a fresh \(z\) adjacent to \(u\) and \(s\); if \(e=wt\), use
\[
        w-t-y-s-z-u-w,
\]
and otherwise use
\[
        t-a-u-z-s-y-t.
\]
All displayed cycles have length at most \(8\) and are contained in \(H\).

\smallskip
\noindent\emph{Two connector edges.}
Let the two connector edges be \(ys\) and \(zu\), where \(y,z\in Y\setminus P\) and \(s,u\in S\).  Choose \(t\in S\setminus\{s\}\) with \(yt\in E(H)\), and choose \(v\in S\setminus\{u\}\) with \(zv\in E(H)\).

If \(y=z\), then the two connector edges are distinct only when \(s\ne u\), and in that case the cycle \(s-y-u-w-s\) works.  Hence assume \(y\ne z\).  If \(s=u\), then if \(t=v\), the cycle \(s-y-t-z-s\) has length \(4\); if \(t\ne v\), choose a fresh witness \(x\) adjacent to \(t\) and \(v\), and use
\[
        s-y-t-x-v-z-s.
\]
Now assume \(s\ne u\).  If \(t=u\) and \(v=s\), use \(s-y-u-z-s\).  If \(t=u\) and \(v\ne s\), choose a fresh \(x\) adjacent to \(v\) and \(s\), and use
\[
        s-y-u-z-v-x-s.
\]
If \(t\ne u\) and \(v=s\), choose a fresh \(x\) adjacent to \(t\) and \(u\), and use
\[
        s-y-t-x-u-z-s.
\]
Finally suppose \(t\ne u\) and \(v\ne s\).  If \(t=v\), then
\[
        s-y-t-z-u-w-s
\]
has length \(6\).  If \(t\ne v\), choose a fresh \(x\) adjacent to \(t\) and \(v\), and use
\[
        s-y-t-x-v-z-u-w-s.
\]
This completes the proof.
\end{proof}

\subsection{Construction of the \texorpdfstring{\(C_{\le8}\)}{C<=8} core}

Let
\[
        B_8=N_J(A_8)\cap B_w=\bigcup_{a\in A_8}X_a,
\]
so that \(S\subseteq B_8\), and let
\[
        Y=\{y\in A\setminus\{w\}: |N_J(y)\cap S|\ge2\}.
\]
Define \(H_8\) by
\[
        E(H_8)=E_J(A_8,B_8)\cup\{wb:b\in B_8\}\cup E_J(Y,S).
\]
Since \(B_8\subseteq B_w\), every edge from \(a\in A_8\) to \(B_8\) ends in \(X_a\), and since \(X_a\subseteq B_8\) for every \(a\in A_8\),
\[
        e(H_8)
        \ge e_J(A_8,B_8)
        =\sum_{a\in A_8}|X_a|
        \ge c_3\frac{N^2}{K^2}
        \ge c_8\frac{n^2}{k^2}
\]
by \cref{lem:normalization-core}.  The fresh-witness condition in \cref{lem:petal-routing} follows from \(\codeg_J(s,t)\ge100\) for distinct anchors \(s,t\in S\).  Therefore \cref{lem:petal-routing} shows that \(H_8\) is internally \(C_{\le8}\)-connected.

\subsection{Construction of the \texorpdfstring{\(C_{\le6}\)}{C<=6} core}

We use the same centre \(w\) and the same local graph \(F_w=J[A_w,B_w]\).  For \(s\in B_w\), define
\[
        M_s=\sum_{\substack{a\in A_w\\s\in X_a}}|X_a|.
\]
Then
\[
        \sum_{s\in B_w}M_s
        =\sum_{a\in A_w}|X_a|^2
        \ge L\sum_{a\in A_w}|X_a|
        =Le(F_w).
\]
Thus some \(s\in B_w\) satisfies
\[
        M_s\ge \frac{Le(F_w)}{|B_w|}.
\]
Using \cref{lem:local-core},
\[
        M_s\ge L\frac{N}{4K}=\frac{N^2}{400K^3}.
\]
Fix such an \(s\), and put
\[
        A_6=\{a\in A_w:s\in X_a\},
        \qquad
        B_6=N_J(A_6)\cap B_w=\bigcup_{a\in A_6}X_a.
\]
Let \(H_6\) have edge set
\[
        E(H_6)=E_J(A_6,B_6)\cup\{wb:b\in B_6\}.
\]
Since \(B_6\subseteq B_w\), every edge from \(a\in A_6\) to \(B_6\) ends in \(X_a\).  Thus
\[
        e(H_6)
        \ge e_J(A_6,B_6)
        =\sum_{a\in A_6}|X_a|
        =M_s
        \ge \frac{N^2}{400K^3}
        \ge c_6\frac{n^2}{k^3}
\]
by \cref{lem:normalization-core}.  Since every \(a\in A_6\) has \(s\in X_a\) and \(|X_a|\ge L\ge2\), \cref{lem:same-anchor-petals} applies and shows that \(H_6\) is internally \(C_{\le6}\)-connected.  This completes the proof of \cref{thm:internal-cores-intro}.

\section{An ambient strong \texorpdfstring{\(C_6\)}{C6} companion}\label{sec:ambient-c6}

This short section proves \cref{thm:ambient-c6-intro}.  The result uses ambient witnesses and is logically separate from the internal core theorem above.

\begin{lemma}[Bipartite regularization for the ambient argument]\label{lem:bip-reg-ambient}
Let \(1\le k=o(n)\), and let \(G\) be an \(n\)-vertex graph with at least \(n^2/k\) edges.  Then \(G\) contains a bipartite subgraph \(G_0=(A,B;E_0)\) with
\[
        e(G_0)\ge \frac{3}{8}\frac{n^2}{k},
        \qquad
        \delta_{\min}(G_0)\ge \frac{n}{8k},
        \qquad
        e(G_0)\le 2\frac{n^2}{k}
\]
for all sufficiently large \(n\).
\end{lemma}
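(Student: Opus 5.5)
The plan is a three-step cleaning: trim the edge count from above, pass to a large bipartite subgraph, then remove low-degree vertices. No earlier result from the paper is needed.

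\emph{Step 1 (upper bound).} Since \(e(G)\ge n^2/k\), delete edges of \(G\) arbitrarily until exactly \(m=\ceil{n^2/k}\) remain, giving a spanning subgraph \(G_1\). Then
\[
        \frac{n^2}{k}\le m\le \frac{n^2}{k}+1\le 2\frac{n^2}{k},
\]
where the last inequality holds because \(k=o(n)\) forces \(n^2/k\ge1\) for large \(n\). Every later subgraph has at most \(m\) edges, so the upper bound \(e(G_0)\le 2n^2/k\) is automatic.

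\emph{Step 2 (bipartition).} Take a maximum cut of \(G_1\) with parts \(A,B\). The bipartite graph \(G_2=G_1[A,B]\) then has \(e(G_2)\ge m/2\ge n^2/(2k)\).

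\emph{Step 3 (minimum degree).} Repeatedly delete from \(G_2\) any vertex whose current degree is less than \(n/(8k)\), until none remains. At most \(n\) vertices are deleted, and each deletion removes fewer than \(n/(8k)\) edges. So fewer than \(n^2/(8k)\) edges are lost in total, and the final graph \(G_0\) satisfies
\[
        e(G_0)> \frac{n^2}{2k}-\frac{n^2}{8k}=\frac38\frac{n^2}{k}>0.
\]
In particular \(G_0\) is non-empty. By construction every remaining vertex has degree at least \(n/(8k)\), and \(G_0\) is bipartite with sides \(A\cap V(G_0)\) and \(B\cap V(G_0)\).

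There is no genuine obstacle. The one point needing care is the upper bound, which is why the truncation to \(\ceil{n^2/k}\) edges is done before anything else. The hypothesis \(k=o(n)\) is used only to absorb the rounding term \(+1\) in Step 1 for large \(n\).
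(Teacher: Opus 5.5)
Your proposal is correct and matches the paper's proof step for step. Both arguments truncate to exactly \(\lceil n^2/k\rceil\) edges (possible since \(e(G)\) is an integer at least \(n^2/k\)), pass to a maximum cut, and then repeatedly delete vertices of degree below \(n/(8k)\), losing fewer than \(n^2/(8k)\) edges; \(k=o(n)\) is used only to absorb the rounding in the upper bound.
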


\begin{proof}
Choose exactly \(\ceil{n^2/k}\) edges of \(G\), and let \(G_1\) be the resulting spanning subgraph.  A maximum bipartite subgraph of \(G_1\) has at least half its edges, so it has at least \(n^2/(2k)\) edges and at most \(\ceil{n^2/k}\) edges.  Repeatedly delete any vertex whose current degree is smaller than \(n/(8k)\).  Since at most \(n\) vertices are deleted and each deletion removes fewer than \(n/(8k)\) edges, fewer than \(n^2/(8k)\) edges are removed.  The remaining bipartite graph has at least \(3n^2/(8k)\) edges and minimum degree at least \(n/(8k)\).  Since \(k=o(n)\), we have \(n^2/k\to\infty\), and hence \(\ceil{n^2/k}\le2n^2/k\) for all sufficiently large \(n\).
\end{proof}

\begin{proof}[Proof of \cref{thm:ambient-c6-intro}]
Apply \cref{lem:bip-reg-ambient} and work inside the resulting bipartite graph \(G_0=(A,B;E_0)\).  For \(a,a'\in A\), write
\[
        c(a,a')=|N_{G_0}(a)\cap N_{G_0}(a')|.
\]
The number of length-two paths with endpoints in \(A\) is
\[
        P=\sum_{\{a,a'\}\subseteq A}c(a,a')
        =\sum_{b\in B}\binom{d_{G_0}(b)}{2}.
\]
Since \(|B|\le n\), \(e(G_0)\gg n^2/k\), and \(k=o(n)\), convexity gives
\[
        P\gg \frac{e(G_0)^2}{n}\gg \frac{n^3}{k^2}.
\]
The number of \(C_4\)'s in \(G_0\) is
\[
        Z=\sum_{\{a,a'\}\subseteq A}\binom{c(a,a')}{2}.
\]
Because \(P/\binom{|A|}{2}\gg n/k^2\to\infty\), another convexity estimate yields
\[
        Z\gg \frac{P^2}{n^2}\gg \frac{n^4}{k^4}.
\]
Since \(e(G_0)\ll n^2/k\), some edge \(xy\in E(G_0)\), with \(x\in A\) and \(y\in B\), lies in
\[
        \gg \frac{Z}{e(G_0)}\gg \frac{n^2}{k^3}
\]
copies of \(C_4\).  Let
\[
        F=E\bigl(G_0[N(y)\setminus\{x\},\,N(x)\setminus\{y\}]\bigr).
\]
Each edge of \(F\) is opposite \(xy\) in a unique \(C_4\) containing \(xy\), and conversely, so \(|F|\gg n^2/k^3\).

Take two distinct edges \(ab,a'b'\in F\), where \(a,a'\in N(y)\setminus\{x\}\) and \(b,b'\in N(x)\setminus\{y\}\).  If the two edges are disjoint, then
\[
        a-b-x-b'-a'-y-a
\]
is a \(C_6\) in \(G_0\), hence in \(G\), containing both.  If they share their \(A\)-endpoint, say \(a=a'\), then
\[
        a-b-x-b'-a
\]
is a \(C_4\).  If they share their \(B\)-endpoint, say \(b=b'\), then
\[
        a-b-a'-y-a
\]
is a \(C_4\).  Thus \(F\) is ambiently strongly \(C_6\)-connected in \(G\).
\end{proof}

\section{Random cyclic shift-lifts}\label{sec:construction}

We now turn to the internal strong \(C_6\) obstruction.  The construction is a random cyclic shift-lift of \(K_{q,q}\).

Call a pair \((s,\kappa)\) \emph{admissible} if either
\[
        1<s<2\quad\text{and}\quad \kappa>0,
\]
or
\[
        s=2\quad\text{and}\quad 0<\kappa<(100e)^{-2}.
\]
Let \(t\) be a positive integer and put
\[
        q=\floor{\kappa t^s}.
\]
Let
\[
        C=(c_{ij})_{i,j\in[q]}
\]
be a \(q\times q\) matrix over \(\Z_t=\Z/t\Z\).  Define a bipartite graph \(G_C\) as follows.  Its left and right vertex classes are
\[
        L=\{x_{i,a}:i\in[q],\ a\in\Z_t\},
        \qquad
        R=\{y_{j,b}:j\in[q],\ b\in\Z_t\}.
\]
For each \((i,j)\in[q]\times[q]\), insert the perfect matching
\[
        x_{i,a}y_{j,a+c_{ij}},
        \qquad a\in\Z_t.
\]
Thus each pair of base fibres is joined by one cyclic shift matching.  The graph is simple and bipartite.  It has
\[
        n=2qt,
        \qquad
        e(G_C)=q^2t.
\]
With the normalization \(\rho(G_C)=e(G_C)/n^2\), its density is exactly
\[
        \rho(G_C)=\frac{q^2t}{4q^2t^2}=\frac1{4t},
\]
and hence
\[
        \rho(G_C)^3n^2
        =\frac1{64t^3}\cdot4q^2t^2
        =\frac{q^2}{16t}.
\]

\section{Deterministic reductions for internally strongly \texorpdfstring{$C_6$}{C6}-connected subgraphs}\label{sec:core-bound}

A vertex of a subgraph \(H\) is called \emph{active} if it is non-isolated in \(H\).

\begin{lemma}[Codegree core]\label{lem:codegree-core}
Let \(H=(A,B;E)\) be a bipartite internally strongly \(C_6\)-connected graph.  Then
\[
        |N_H(a)\cap N_H(a')|\ge2
\]
for every pair of distinct active vertices \(a,a'\in A\), and
\[
        |N_H(b)\cap N_H(b')|\ge2
\]
for every pair of distinct active vertices \(b,b'\in B\).
\end{lemma}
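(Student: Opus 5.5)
By the symmetry of the definition between the two sides, it suffices to treat two distinct active vertices \(a,a'\in A\); the argument for \(B\) is identical with the roles of the sides exchanged. Since \(a\) and \(a'\) are active, we may pick edges \(e=ab\) and \(f=a'b'\) of \(H\). These are distinct because \(a\ne a'\). The plan is to pass through adjacent pairs, where the definition guarantees a \(C_4\) inside \(H\). A \(C_4\) in a bipartite graph through two left vertices automatically gives those two vertices two common neighbours.

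The first case is \(b=b'\). Then \(e\) and \(f\) are adjacent distinct edges, so they lie together on a \(C_4\) contained in \(H\). This \(C_4\) has the form \(a-b-a'-b''-a\), because it contains the path \(a-b-a'\) and has two vertices on each side. Hence \(b,b''\in N_H(a)\cap N_H(a')\) with \(b\ne b''\), which gives the bound \(2\).

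The second case is \(b\ne b'\). The definition still says that \(e\) and \(f\) lie together on a \(C_4\) or \(C_6\) in \(H\), so \(a\) and \(a'\) lie on a common cycle \(Q\) of length at most \(6\). If \(Q\) is a \(C_4\), then \(a\) and \(a'\) are opposite on it and we are done as before. If \(Q\) is a \(C_6\), write it as \(a-b_1-a'-b_2-a''-b_3-a\), noting that \(a\) and \(a'\) are at distance two along it. Then \(b_1\) is a common neighbour of \(a\) and \(a'\) in \(H\). The edges \(ab_1\) and \(a'b_1\) are adjacent distinct edges of \(H\), so by the strong condition they lie on a \(C_4\) \(a-b_1-a'-b^*-a\) in \(H\), with \(b^*\ne b_1\). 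This again gives two common neighbours.

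Both cases therefore reduce to the same observation. Whenever \(a\) and \(a'\) have one common neighbour \(c\), the adjacent pair \(ac,\,a'c\) forces a \(C_4\) in \(H\), and hence a second common neighbour. The only real step is to obtain one common neighbour from an arbitrary pair of edges at \(a\) and \(a'\). This is supplied by the fact that any cycle of length at most \(6\) in a bipartite graph through two vertices on the same side places them at distance two along the cycle.
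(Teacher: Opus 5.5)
Your proof is correct and takes essentially the same route as the paper. A \(C_4\) or \(C_6\) through edges at \(a\) and \(a'\) gives one common neighbour, because same-side vertices on such a bipartite cycle are at distance two, and the adjacent-edge \(C_4\) condition at that common neighbour gives a second; the paper argues by contradiction (no common neighbour, then exactly one), while you argue directly with a harmless extra case \(b=b'\), but the content is the same.
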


\begin{proof}
We prove the assertion for \(A\); the proof for \(B\) is identical.

First suppose that distinct active vertices \(a,a'\in A\) have no common neighbour in \(H\).  Choose incident edges \(ab\) and \(a'b'\) of \(H\).  In a bipartite \(C_4\) or \(C_6\), any two vertices in the same bipartition class have a common neighbour on the cycle.  Hence no \(C_4\) or \(C_6\) contained in \(H\) can contain both edges \(ab\) and \(a'b'\), contradicting internal strong \(C_6\)-connectedness.

Now suppose that \(a\) and \(a'\) have exactly one common neighbour \(b\) in \(H\).  The adjacent edges \(ab\) and \(a'b\) cannot lie together on a \(C_4\) in \(H\), since such a \(C_4\) would give a second common neighbour of \(a\) and \(a'\).  This contradicts the adjacent-edge condition.
\end{proof}

\begin{lemma}[Fibre uniqueness]\label{lem:fibre-unique}
Every internally strongly \(C_6\)-connected subgraph \(H\subseteq G_C\) has at most one active vertex in each left fibre
\[
        \{x_{i,a}:a\in\Z_t\},
\]
and at most one active vertex in each right fibre
\[
        \{y_{j,b}:b\in\Z_t\}.
\]
\end{lemma}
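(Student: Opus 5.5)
By \cref{lem:codegree-core}, it suffices to show that two distinct vertices of the same fibre never have a common neighbour in \(G_C\). Then they certainly cannot have two common neighbours in \(H\), so they cannot both be active.

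Take two distinct left vertices \(x_{i,a}\) and \(x_{i,a'}\) with \(a\ne a'\) in \(\Z_t\). Every neighbour of \(x_{i,a}\) in \(G_C\) has the form \(y_{j,a+c_{ij}}\) for some \(j\in[q]\). Since each base pair of fibres contributes a single perfect matching, the vertex \(x_{i,a}\) has exactly one neighbour in each right fibre \(j\).

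Suppose \(y_{j,b}\) were a common neighbour of both. Then \(b=a+c_{ij}=a'+c_{ij}\), which forces \(a=a'\), a contradiction. Hence \(N_{G_C}(x_{i,a})\cap N_{G_C}(x_{i,a'})=\varnothing\), and a fortiori the same holds in \(H\subseteq G_C\). \Cref{lem:codegree-core} requires distinct active vertices in \(A=L\) to have at least two common neighbours in \(H\), so at most one vertex of the fibre can be active.

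The right fibres are handled symmetrically. A vertex \(y_{j,b}\) is adjacent to \(x_{i,b-c_{ij}}\), which is its unique neighbour in left fibre \(i\). So \(y_{j,b}\) and \(y_{j,b'}\) with \(b\ne b'\) have no common neighbour, and \cref{lem:codegree-core} applied on the \(B\)-side gives the claim.

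There is one small point to check: \cref{lem:codegree-core} is stated for a bipartite \(H=(A,B;E)\). We take \(A=L\cap V(H)\) and \(B=R\cap V(H)\), the bipartition inherited from \(G_C\). Note that "active" only concerns vertices that are non-isolated in \(H\). I expect no real obstacle here; the whole argument rests on the matching structure, since distinct vertices of one fibre have disjoint neighbourhoods.
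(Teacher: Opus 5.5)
Your proposal is correct and follows the paper's proof essentially verbatim. Distinct vertices of one fibre have disjoint neighbourhoods in \(G_C\), because each pair of base fibres is joined by a single shift matching. Hence they have no common neighbour in any \(H\subseteq G_C\), and \cref{lem:codegree-core} then forbids both from being active. The right-fibre case is handled symmetrically.
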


\begin{proof}
Take two distinct vertices \(x_{i,u},x_{i,v}\) in the same left fibre, with \(u\ne v\).  For every right fibre \(j\), their neighbours in that fibre are
\[
        y_{j,u+c_{ij}}
        \qquad\text{and}\qquad
        y_{j,v+c_{ij}},
\]
which are distinct.  Thus \(x_{i,u}\) and \(x_{i,v}\) have no common neighbour in \(G_C\), and hence no common neighbour in any subgraph of \(G_C\).  By \cref{lem:codegree-core}, they cannot both be active in an internally strongly \(C_6\)-connected subgraph.

The right-fibre argument is the same: if \(y_{j,u}\) and \(y_{j,v}\) are distinct vertices in the same right fibre, then their neighbours in a left fibre \(i\) are \(x_{i,u-c_{ij}}\) and \(x_{i,v-c_{ij}}\), which are distinct.
\end{proof}

\section{Aligned two-covers}\label{sec:aligned-covers}

Fix \(I\subseteq[q]\) and shifts \(\alpha_i\in\Z_t\) for \(i\in I\).  For each column \(j\in[q]\) and residue \(h\in\Z_t\), define
\[
        S_{j,h}=\{i\in I:c_{ij}+\alpha_i=h\}.
\]

\begin{definition}[Aligned row two-cover]\label{def:row-cover}
An \emph{aligned row two-cover of size \(a\)} is a triple
\[
        \bigl(I,(\alpha_i)_{i\in I},(h_j)_{j\in[q]}\bigr),
\]
where \(I\subseteq[q]\), \(|I|=a\), \(\alpha_i\in\Z_t\) for \(i\in I\), and \(h_j\in\Z_t\) for \(j\in[q]\), such that every pair of distinct rows \(i,i'\in I\) is contained together in at least two chosen sets:
\[
        \#\{j\in[q]: i,i'\in S_{j,h_j}\}\ge2.
\]
\end{definition}

The column analogue uses the sign arising from the edge equation.  Fix \(J\subseteq[q]\) and shifts \(\gamma_j\in\Z_t\) for \(j\in J\).  For each row \(i\in[q]\) and residue \(h\in\Z_t\), define
\[
        T_{i,h}=\{j\in J:c_{ij}-\gamma_j=h\}.
\]

\begin{definition}[Aligned column two-cover]\label{def:column-cover}
An \emph{aligned column two-cover of size \(a\)} is a triple
\[
        \bigl(J,(\gamma_j)_{j\in J},(h_i)_{i\in[q]}\bigr),
\]
where \(J\subseteq[q]\), \(|J|=a\), \(\gamma_j\in\Z_t\) for \(j\in J\), and \(h_i\in\Z_t\) for \(i\in[q]\), such that every pair of distinct columns \(j,j'\in J\) is contained together in at least two chosen sets:
\[
        \#\{i\in[q]: j,j'\in T_{i,h_i}\}\ge2.
\]
\end{definition}

The sign convention matches
\[
        x_{i,\alpha_i}y_{j,\gamma_j}\in E(G_C)
        \quad\Longleftrightarrow\quad
        \gamma_j=\alpha_i+c_{ij}
        \quad\Longleftrightarrow\quad
        c_{ij}-\gamma_j=-\alpha_i.
\]

\begin{remark}[Monotonicity]\label{rem:monotonicity}
The aligned two-cover condition is inherited by subsets.  Hence, if a matrix has no aligned row two-cover of size \(a\), then it has no aligned row two-cover of any size at least \(a\).  The same statement holds for aligned column two-covers.
\end{remark}

\section{The occupancy-tail lemma}\label{sec:occupancy}

Let \((s,\kappa)\) be admissible.  Let \(C\) be uniformly random in \(\Z_t^{q\times q}\), with all entries independent, where \(q=\floor{\kappa t^s}\).  Set
\[
        a=\ceil{\frac{q}{\sqrt t\log t}}.
\]
Then
\[
        \frac aq=\frac{1+o(1)}{\sqrt t\log t},
\]
so, for all sufficiently large \(t\), one has \(1\le a<q\).

\begin{lemma}[Occupancy tail]\label{lem:occupancy}
With probability tending to \(1\), the matrix \(C\) has no aligned row two-cover of size \(a\) and no aligned column two-cover of size \(a\).
\end{lemma}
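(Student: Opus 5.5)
We would use a first-moment argument over all candidate covers. The key reduction removes the column targets \(h_j\) from the union bound. Fix \(I\subseteq[q]\) with \(|I|=a\) and shifts \((\alpha_i)_{i\in I}\). For each column \(j\), let \(m_j=\max_{h}|S_{j,h}|\). Each column covers pairs of \(I\) only through its single chosen set \(S_{j,h_j}\), so it covers at most \(\binom{m_j}{2}\) pairs. Any choice of \((h_j)\) covers every pair at least twice only if
\[
        \sum_{j\in[q]}\binom{m_j}{2}\ \ge\ 2\binom a2 .
\]
This event depends only on \((I,\alpha)\). The union bound therefore runs over at most \(\binom qa t^a\le \exp\bigl(a\log(eqt/a)\bigr)=\exp(O(a\log t))\) choices, not over the \(t^q\) choices of \((h_j)\).

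For fixed \((I,\alpha)\), the values \(c_{ij}+\alpha_i\), \(i\in I\), \(j\in[q]\), are independent and uniform in \(\Z_t\), since a shift preserves the uniform distribution. Hence the columns are independent. In each column, \(m_j\) is the maximum load when \(a\) balls are thrown into \(t\) bins. Put \(Z_j=\binom{m_j}{2}\). We must show
\[
        \Pr\Bigl(\sum_j Z_j\ge a(a-1)\Bigr)\le \exp(-\omega(a\log t)).
\]
The column covers are handled identically. The equation \(c_{ij}-\gamma_j=h\) again gives iid uniform values after fixing \((J,\gamma)\), and the roles of rows and columns are symmetric. Monotonicity (\cref{rem:monotonicity}) is not even needed, since we bound size exactly \(a\).

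The expectation is small in the right way. With \(\mu=a/t\), a typical bin load is about \(\max(\mu,1)\). Summing over \(q\) columns gives a mean of \(\sum Z_j\) of order \(q\mu^2=qa^2/t^2=\kappa a^2 t^{s-2}\), up to lower-order terms. For \(s<2\) this is \(o(a^2)\). For \(s=2\) it is at most a small multiple of \(a^2\), because \(\kappa<(100e)^{-2}\). The constant \(100e\) is presumably tuned so that a Chernoff-type tail \((e\mu/x)^x\) for bin loads closes. When \(\mu<1\) we must also count collisions of bounded load. Their total contribution is roughly \(q\,a^2/t\) pairs only if we summed over all bins. Since we take the maximum bin, this drops to \(q\) times the probability of any collision times \(O(1)\), which is \(O(qa^2/t)\) when \(a^2/t\) is small. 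This again is \(o(a^2)\) because \(q/t\cdot(a^2/t)\ll a^2\)? The parameters show \(q\ll t\,\sqrt t\log t\). I would double-check this regime carefully and, if needed, bound by \(q\min(1,a^2/t)\binom{m}{2}\)-type estimates.

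The main obstacle is the large-deviation step, which must beat the entropy factor \(\exp(O(a\log t))\). I would use exponential moments, \(\Pr(\sum Z_j\ge a^2/2)\le e^{-\lambda a^2/2}\prod_j\mathbb E e^{\lambda Z_j}\), with \(\lambda=C\log t/a\). This makes \(\lambda a^2\gg a\log t\). Then I would show \(\log\mathbb E e^{\lambda Z_j}\le \lambda\,\mathbb E Z_j(1+o(1))\). This holds if \(\lambda Z_j\) stays bounded with overwhelming probability, i.e.\ \(m_j\lesssim \sqrt{a/\log t}\). The union bound gives \(\Pr(m_j\ge x)\le t\,(e\mu/x)^x\), which makes that threshold hold with probability super-polynomially small in \(t\). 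The rare large-load event contributes at most \(e^{\lambda a^2}t(e\mu/x)^x\), so \(x\) must be taken where this is still negligible. Balancing these tails, especially at \(s=2\), where \(\mu\approx \sqrt t\sqrt\kappa/\log t\) is large and the small constant \(\kappa\) is essential, is where the real work lies. If the product bound is too lossy, I would truncate \(Z_j\) at \(\binom{x}{2}\) and handle columns with \(m_j>x\) by a separate union bound over their number.
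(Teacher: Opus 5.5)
Your outline matches the paper's: fix \((I,\alpha)\), reduce a cover to a condition on the column maxima \(m_j\), use independence across columns, take an exponential moment, and union bound over the \(\binom qa t^a\) choices. But there is a genuine gap. The large-deviation step, which you defer as ``where the real work lies'', is aimed at an impossible target, and the parameters you propose for it fail.

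A bound \(\exp(-\omega(a\log t))\) cannot hold. If two columns have all \(a\) values \(c_{ij}+\alpha_i\) equal, then \((I,\alpha)\) already extends to an aligned two-cover. This happens with probability at least \(t^{2-2a}=\exp(-(2-o(1))a\log t)\).

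Correspondingly, taking \(\lambda=C\log t/a\) with \(C\) large destroys the moment generating function. Since \(\Pr(m_j=a)=t^{1-a}\), you get \(\mathbb E e^{\lambda Z_j}\ge t^{1-a}e^{\lambda\binom a2}=t^{(C/2-1)(a-1)}\). Once \(C>2\), the product over \(q\) columns makes the Markov bound useless. No truncation level \(x\) rescues this, because the offending event is a single fully aligned column.

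The margin is therefore only a constant factor in the exponent, and two things follow.
\begin{itemize}
\item You need the sharp entropy count \(\log\bigl(\binom qa t^a\bigr)\le(1.5+o(1))a\log t\), which uses \(q/a=(1+o(1))\sqrt t\log t\), not merely \(O(a\log t)\). Against it you need a tail exponent strictly between \(1.5\) and \(2\).
\item The threshold may be weakened only by a factor close to \(1\). Your displayed Markov step at level \(a^2/2\) cannot work: one fully aligned column (probability \(t^{1-a}\)) essentially reaches that level by itself, and \(t^{-a}\) does not beat the entropy.
\end{itemize}

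The paper works inside this window. It uses \(\sum_jM_j^2\ge2a(a-1)\) and \(\lambda=0.9\log t/a\), just below the critical value \(\log t/a\). This yields \(\exp(-(1.62+o(1))a\log t)\) for each fixed \((I,\alpha)\), which beats the \(1.5\) entropy exponent.

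The paper also sidesteps the bulk estimate for \(\mathbb E Z_j\) that you leave unresolved when \(a<t\). It truncates deterministically at \(R=a/(100\sqrt q)\), so columns with \(M_j<R\) contribute at most \(qR^2=a^2/10^4\) in total. On the cover event this leaves \(\sum_jY_j\ge1.8a^2\) with \(Y_j=M_j^2\mathbf 1_{\{M_j\ge R\}}\), and only \(Y_j\) enters the exponential moment.

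The tail is then handled by the bound \(\Pr(M_j\ge r)e^{\lambda r^2}\le\exp(\log t-r\phi_t(r/a))\), where \(\phi_t(x)=\log(xt/e)-0.9x\log t\) is concave. Concavity reduces the whole range \(R\le r\le a\) to its two endpoints:
\begin{itemize}
\item at \(x=1\), the choice \(0.9<1\) is what makes \(\phi_t(1)>0\);
\item at \(x=1/(100\sqrt q)\), one needs \(\kappa<(100e)^{-2}\) when \(s=2\).
\end{itemize}
This gives \(\mathbb E e^{\lambda Y_j}\le1+o(1/q)\), and hence a product \(1+o(1)\) over the columns.
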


\begin{proof}
It suffices to prove the row statement.  The column statement follows by applying the same argument to the transpose of \(C\), with shifts \(-\gamma_j\), since
\[
        (C^T)_{j,i}+(-\gamma_j)=c_{ij}-\gamma_j.
\]

Fix a row set \(I\subseteq[q]\), \(|I|=a\), and fixed shifts \(\alpha_i\in\Z_t\) for \(i\in I\).  For each column \(j\), define
\[
        M_j=\max_{h\in\Z_t}|\{i\in I:c_{ij}+\alpha_i=h\}|.
\]
For fixed \(I\) and \(\alpha\), the values \(c_{ij}+\alpha_i\), \(i\in I\), are independent uniform elements of \(\Z_t\), and the random variables \(M_j\) are independent as \(j\) varies.

If an aligned row two-cover exists for this fixed \(I\) and \(\alpha\), then for some residues \(h_j\),
\[
        \sum_{j=1}^q\binom{|S_{j,h_j}|}{2}
        \ge2\binom a2
        =a(a-1).
\]
Since \(|S_{j,h_j}|\le M_j\), this implies
\[
        \sum_{j=1}^qM_j^2\ge2a(a-1).
\]
Put
\[
        R=\frac{a}{100\sqrt q}.
\]
Because \(q=\floor{\kappa t^s}\) and \(s>1\),
\[
        a=\frac{q}{\sqrt t\log t}(1+o(1))
        =\Theta_{s,\kappa}\left(\frac{t^{s-1/2}}{\log t}\right),
\]
and
\[
        R=\Theta_{s,\kappa}\left(\frac{t^{(s-1)/2}}{\log t}\right)\to\infty.
\]
The columns with \(M_j<R\) contribute less than
\[
        qR^2=\frac{a^2}{10^4}
\]
to \(\sum_jM_j^2\).  Therefore, on the fixed-cover event,
\[
        \sum_{j:M_j\ge R}M_j^2
        \ge2a(a-1)-\frac{a^2}{10^4}
        \ge1.8a^2
\]
for all sufficiently large \(t\).  Define
\[
        Y_j=M_j^2\mathbf1_{\{M_j\ge R\}}.
\]
Thus the fixed-cover event implies
\[
        \sum_{j=1}^qY_j\ge1.8a^2.
\]

For one column and \(1\le r\le a\), the standard balls-in-bins union bound gives
\[
        \Pr(M_j\ge r)
        \le t\binom ar t^{-r}
        \le t\left(\frac{ea}{rt}\right)^r.
\]
Indeed, choose the common residue, choose \(r\) rows, and require those \(r\) entries to equal that residue.  Let
\[
        \lambda=\frac{0.9\log t}{a}.
\]
For \(\ceil{R}\le r\le a\), write \(r=xa\).  Then
\[
        \frac{1}{100\sqrt q}\le x\le1.
\]
The tail bound gives
\begin{align*}
        \Pr(M_j\ge r)e^{\lambda r^2}
        &\le
        \exp\left(\log t-r\log\frac{rt}{ea}+\lambda r^2\right) \\
        &=\exp\left(\log t-r\left(\log\frac{xt}{e}-0.9x\log t\right)\right).
\end{align*}
Set
\[
        \phi_t(x)=\log\frac{xt}{e}-0.9x\log t.
\]
The function \(\phi_t\) is concave on \((0,\infty)\), so its minimum on \([1/(100\sqrt q),1]\) is attained at an endpoint.  At \(x=1\),
\[
        \phi_t(1)=0.1\log t-1.
\]
At \(x=1/(100\sqrt q)\),
\[
        \phi_t(x)=\log\frac{t}{100e\sqrt q}-0.9x\log t.
\]
If \(1<s<2\), then
\[
        \phi_t\left(\frac1{100\sqrt q}\right)
        =\left(1-\frac s2\right)\log t-O_{s,\kappa}(1)-o(1),
\]
and therefore there is a constant \(\xi=\xi(s,\kappa)>0\) such that, for all large \(t\),
\[
        \phi_t(x)\ge\xi\log t
        \qquad\left(\frac1{100\sqrt q}\le x\le1\right).
\]
If \(s=2\), then the admissibility condition \(\kappa<(100e)^{-2}\) gives
\[
        D_\kappa=\log\frac1{100e\sqrt\kappa}>0,
\]
and
\[
        \phi_t\left(\frac1{100\sqrt q}\right)=D_\kappa+o(1).
\]
Taking \(\xi=D_\kappa/2\), and using also \(\phi_t(1)\to\infty\), for all large \(t\) we have
\[
        \phi_t(x)\ge\xi
        \qquad\left(\frac1{100\sqrt q}\le x\le1\right).
\]

Consequently, if \(L_t=\log t\) in the case \(1<s<2\) and \(L_t=1\) in the case \(s=2\), then in both cases
\[
        \Pr(M_j\ge r)e^{\lambda r^2}
        \le \exp(\log t-\xi rL_t)
        \qquad(\ceil{R}\le r\le a).
\]
Summing over \(r\),
\[
        \sum_{r=\ceil{R}}^a\Pr(M_j\ge r)e^{\lambda r^2}
        \le a\exp(\log t-\xi RL_t).
\]
This upper bound is \(o(1/q)\).  Indeed, if \(1<s<2\), then
\[
        RL_t=\Theta_{s,\kappa}\bigl(t^{(s-1)/2}\bigr),
\]
so \(aq\exp(\log t-\xi RL_t)\to0\).  If \(s=2\), then
\[
        RL_t=R=\Theta_\kappa\left(\frac{\sqrt t}{\log t}\right),
\]
and again \(aq\exp(\log t-\xi RL_t)\to0\).  Hence
\[
        \mathbb E e^{\lambda Y_j}
        \le1+\sum_{r=\ceil{R}}^a\Pr(M_j=r)e^{\lambda r^2}
        \le1+o(1/q),
\]
uniformly over the fixed choice of \(I\) and \(\alpha\).  The variables \(Y_j\) are independent, so
\[
        \mathbb E\exp\left(\lambda\sum_{j=1}^qY_j\right)
        \le(1+o(1/q))^q
        =1+o(1).
\]
By Markov's inequality,
\[
        \Pr\left(\sum_{j=1}^qY_j\ge1.8a^2\right)
        \le(1+o(1))e^{-1.8\lambda a^2}
        =\exp\bigl(-(1.62+o(1))a\log t\bigr).
\]
Thus, for this fixed \(I\) and \(\alpha\), the probability that an aligned row two-cover exists is at most
\[
        \exp\bigl(-(1.62+o(1))a\log t\bigr).
\]

There are at most
\[
        \binom qa t^a\le\left(\frac{eq}{a}\right)^a t^a
\]
choices of \(I\) and \(\alpha\).  Since
\[
        \frac qa=(1+o(1))\sqrt t\log t,
\]
we have
\begin{align*}
        \log\left(\binom qa t^a\right)
        &\le a\log\frac{eq}{a}+a\log t \\
        &=a\left(\frac12\log t+\log\log t+O(1)\right)+a\log t \\
        &=(1.5+o(1))a\log t.
\end{align*}
The union bound gives
\[
        \Pr(\text{there is an aligned row two-cover of size }a)
        \le\exp\bigl(-(0.12+o(1))a\log t\bigr)=o(1).
\]
Applying the same argument to \(C^T\) gives the column statement.  A final union bound completes the proof.
\end{proof}

\section{Bounding internally strongly \texorpdfstring{$C_6$}{C6}-connected subgraphs}\label{sec:negative-proof}

\begin{lemma}[Active fibres give aligned covers]\label{lem:active-fibres-cover}
Let \(a\) be a positive integer, let \(C\in\Z_t^{q\times q}\), and let \(H\subseteq G_C\) be internally strongly \(C_6\)-connected.  If \(H\) has at least \(a\) active left fibres, then \(C\) has an aligned row two-cover of size \(a\).  If \(H\) has at least \(a\) active right fibres, then \(C\) has an aligned column two-cover of size \(a\).
\end{lemma}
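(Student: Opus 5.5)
By \cref{lem:fibre-unique}, every active left fibre \(i\) of \(H\) contains exactly one active vertex. Call it \(x_{i,\alpha_i}\). Let \(I_0\) be the set of active left fibre indices, so \(|I_0|\ge a\). Choose any \(I\subseteq I_0\) with \(|I|=a\) and keep the shifts \(\alpha_i\) for \(i\in I\). Likewise, for each active right fibre \(j\) let \(y_{j,\gamma_j}\) be its unique active vertex.

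Now choose the residues \(h_j\) for every column \(j\in[q]\). If fibre \(j\) is active, put \(h_j=\gamma_j\). Otherwise \(h_j\) is arbitrary, say \(0\). By the sign convention, \(x_{i,\alpha_i}\) is adjacent to \(y_{j,\gamma_j}\) in \(G_C\) exactly when \(c_{ij}+\alpha_i=\gamma_j\), that is, when \(i\in S_{j,\gamma_j}\). Hence every \(H\)-neighbour of \(x_{i,\alpha_i}\) is an active right vertex \(y_{j,\gamma_j}\) with \(i\in S_{j,h_j}\).

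The key step is the verification. Take distinct \(i,i'\in I\). Both \(x_{i,\alpha_i}\) and \(x_{i',\alpha_{i'}}\) are active and lie on the same side, so \cref{lem:codegree-core} gives at least two common \(H\)-neighbours. Each common neighbour is an active right vertex, hence of the form \(y_{j,\gamma_j}\) by fibre uniqueness. Two distinct common neighbours therefore lie in distinct fibres \(j\ne j'\), since each fibre has only one active vertex. For each such \(j\) we have \(i,i'\in S_{j,h_j}\), so the pair \(i,i'\) lies together in at least two chosen sets. This is exactly an aligned row two-cover of size \(a\).

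The column statement is symmetric. Restrict to \(a\) active right fibres \(J\) with shifts \(\gamma_j\), and set \(h_i=-\alpha_i\) on active left fibres, arbitrary elsewhere. Adjacency \(x_{i,\alpha_i}y_{j,\gamma_j}\in E(G_C)\) is equivalent to \(c_{ij}-\gamma_j=-\alpha_i\), so \(j\in T_{i,h_i}\). The codegree argument on the right side then gives two distinct rows covering each pair \(j,j'\). The only delicate point is the step that common neighbours lie in distinct fibres, and this is immediate from \cref{lem:fibre-unique}. I expect no real obstacle beyond keeping the signs consistent.
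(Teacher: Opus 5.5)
Your proposal is correct and follows the paper's proof essentially step for step. You use \cref{lem:fibre-unique} to fix the shifts $\alpha_i$ and set $h_j=\gamma_j$ (respectively $h_i=-\alpha_i$). You then use \cref{lem:codegree-core} to obtain, for each pair, two common neighbours in distinct fibres, and each of these certifies that the pair lies in a chosen set $S_{j,h_j}$ (respectively $T_{i,h_i}$).
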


\begin{proof}
We prove the left-fibre assertion.  Choose \(a\) active left fibres.  Let their base indices form \(I\), and write the active left vertex in fibre \(i\in I\) as \(x_{i,\alpha_i}\).  By \cref{lem:fibre-unique}, every active right fibre contains at most one active vertex.  For each active right fibre \(j\), write its active vertex as \(y_{j,\gamma_j}\) and set \(h_j=\gamma_j\); choose \(h_j\) arbitrarily for inactive right fibres.

Take distinct \(i,i'\in I\).  By \cref{lem:codegree-core}, \(x_{i,\alpha_i}\) and \(x_{i',\alpha_{i'}}\) have at least two common neighbours in \(H\).  These common neighbours are active and, by \cref{lem:fibre-unique}, lie in distinct right fibres.  If \(y_{j,\gamma_j}\) is such a common neighbour, then
\[
        c_{ij}+\alpha_i=\gamma_j=h_j,
        \qquad
        c_{i'j}+\alpha_{i'}=\gamma_j=h_j.
\]
Thus \(i,i'\in S_{j,h_j}\).  Since every pair \(i,i'\) has at least two such right fibres, the chosen data form an aligned row two-cover of size \(a\).

The right-fibre assertion is the same argument with the sign convention in \cref{def:column-cover}.  Choose \(a\) active right fibres with base indices \(J\), and write their active vertices as \(y_{j,\gamma_j}\) for \(j\in J\).  For each active left fibre \(i\), write its active vertex as \(x_{i,\alpha_i}\) and set \(h_i=-\alpha_i\); choose \(h_i\) arbitrarily for inactive left fibres.  A common active left neighbour of \(y_{j,\gamma_j}\) and \(y_{j',\gamma_{j'}}\) satisfies
\[
        c_{ij}-\gamma_j=-\alpha_i=h_i,
        \qquad
        c_{ij'}-\gamma_{j'}=-\alpha_i=h_i,
\]
which is precisely the aligned column condition.
\end{proof}

\begin{proposition}[Fixed lift parameter]\label{prop:fixed-s}
Let \((s,\kappa)\) be admissible, put \(q=\floor{\kappa t^s}\), and let
\[
        a=\ceil{\frac{q}{\sqrt t\log t}}.
\]
For all sufficiently large \(t\), there exists a matrix \(C\in\Z_t^{q\times q}\) such that every internally strongly \(C_6\)-connected subgraph \(H\subseteq G_C\) satisfies
\[
        e(H)<a^2
        =O_{s,\kappa}\left(\frac{q^2}{t(\log t)^2}\right)
        =O_{s,\kappa}\left(\frac{\rho(G_C)^3n^2}{(\log t)^2}\right).
\]
\end{proposition}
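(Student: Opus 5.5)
By \cref{lem:occupancy}, for all sufficiently large \(t\) the random matrix \(C\) has, with probability tending to \(1\), neither an aligned row two-cover of size \(a\) nor an aligned column two-cover of size \(a\). In particular such a matrix exists for all large \(t\). Fix one such \(C\), and let \(H\subseteq G_C\) be an arbitrary internally strongly \(C_6\)-connected subgraph.

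The contrapositive of \cref{lem:active-fibres-cover}, together with the monotonicity in \cref{rem:monotonicity}, shows that \(H\) has fewer than \(a\) active left fibres and fewer than \(a\) active right fibres. By \cref{lem:fibre-unique}, each active fibre contains exactly one active vertex. Hence \(H\) has fewer than \(a\) active left vertices and fewer than \(a\) active right vertices. Since \(G_C\) is simple and bipartite, every edge of \(H\) joins an active left vertex to an active right vertex, so
\[
        e(H)\le (a-1)^2<a^2.
\]
(If \(H\) has no edges, the bound is trivial.)

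It remains to check the asymptotics. Since \(a=(1+o(1))q/(\sqrt t\log t)\), we have
\[
        a^2=O\!\left(\frac{q^2}{t(\log t)^2}\right).
\]
Section~\ref{sec:construction} gives \(\rho(G_C)^3n^2=q^2/(16t)\), so
\[
        a^2=O\!\left(\frac{\rho(G_C)^3n^2}{(\log t)^2}\right).
\]
The dependence on \((s,\kappa)\) enters only through how large \(t\) must be for the estimates to apply. No step here is a real obstacle: all the probabilistic content sits in \cref{lem:occupancy}. The only point needing care is that ``fewer than \(a\) active fibres'' follows even when \(H\) might have more than \(a\) active fibres, and this is exactly what monotonicity provides, since one can restrict to any \(a\) of them.
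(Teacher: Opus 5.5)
Your proof is correct and follows the paper's argument essentially verbatim: the occupancy lemma supplies a matrix with no aligned two-covers of size \(a\), the active-fibres lemma then forces fewer than \(a\) active fibres on each side, each fibre has one active vertex, and so \(e(H)\le A_HB_H<a^2\). Your appeal to monotonicity is harmless but unnecessary, because the active-fibres lemma is already stated for \emph{at least} \(a\) active fibres and simply selects \(a\) of them.
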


\begin{proof}
By \cref{lem:occupancy}, for all sufficiently large \(t\) there is a matrix \(C\) with no aligned row two-cover of size \(a\) and no aligned column two-cover of size \(a\).  Fix such a matrix.

Let \(H\subseteq G_C\) be internally strongly \(C_6\)-connected.  By \cref{lem:fibre-unique}, \(H\) has at most one active vertex in each left fibre and at most one active vertex in each right fibre.  Let \(A_H\) be the number of active left fibres and \(B_H\) the number of active right fibres.

If \(A_H\ge a\), then \cref{lem:active-fibres-cover} gives an aligned row two-cover of size \(a\), a contradiction.  Hence \(A_H<a\).  Similarly, \(B_H<a\).  There is at most one possible edge between any active left fibre and any active right fibre, since each active fibre contains only one active vertex and \(G_C\) is simple.  Therefore
\[
        e(H)\le A_HB_H<a^2.
\]
The definition of \(a\) gives
\[
        a^2=O_{s,\kappa}\left(\frac{q^2}{t(\log t)^2}\right).
\]
Since \(\rho(G_C)^3n^2=q^2/(16t)\), this is the asserted bound.
\end{proof}

\begin{proof}[Proof of \cref{thm:negative-intro}]
Given \(\beta\in[1/3,1/2)\), choose
\[
        s=\frac1\beta-1.
\]
Then \(1<s\le2\).  If \(1<s<2\), set \(\kappa=1\).  If \(s=2\), set
\[
        \kappa=(200e)^{-2},
\]
which is below \((100e)^{-2}\).  Thus \((s,\kappa)\) is admissible.  Apply \cref{prop:fixed-s} with \(q=\floor{\kappa t^s}\), choosing one admissible matrix \(C\) for each sufficiently large \(t\).  The resulting graph has
\[
        n=2qt=\Theta_{s,\kappa}(t^{s+1}),
        \qquad
        e(G_C)=q^2t=\Theta_{s,\kappa}(t^{2s+1}).
\]
Since \(\beta=1/(s+1)\),
\[
        n^{2-\beta}=\Theta_{s,\kappa}\left(t^{(s+1)(2-1/(s+1))}\right)
        =\Theta_{s,\kappa}(t^{2s+1}),
\]
and therefore \(e(G_C)=\Theta_\beta(n^{2-\beta})\).  Also
\[
        \rho(G_C)=\frac1{4t}=\Theta_{s,\kappa}(t^{-1})=\Theta_\beta(n^{-\beta}).
\]
Moreover,
\[
        n^{2-3\beta}
        =\Theta_{s,\kappa}\left(t^{(s+1)(2-3/(s+1))}\right)
        =\Theta_{s,\kappa}(t^{2s-1})
        =\Theta_{s,\kappa}\left(\frac{q^2}{t}\right)
        =\Theta_{s,\kappa}(\rho(G_C)^3n^2).
\]
\Cref{prop:fixed-s} gives
\[
        e(H)=O_{s,\kappa}\left(\frac{q^2}{t(\log t)^2}\right)
\]
for every internally strongly \(C_6\)-connected subgraph \(H\subseteq G_C\).  Since \(\log n=\Theta_{s,\kappa}(\log t)\), this becomes
\[
        e(H)=O_\beta\left(\frac{n^{2-3\beta}}{(\log n)^2}\right)
        =O_\beta\left(\frac{\rho(G_C)^3n^2}{(\log n)^2}\right).
\]
This constructs the required infinite family.
\end{proof}

\section{A limitation of the random-lift obstruction}\label{sec:zero-slice}

The random-lift obstruction above is naturally tied to the threshold \(q\asymp t^2\), corresponding to \(\beta=1/3\).  Once \(q\) is larger than \(t^2\) by a logarithmic factor, the same random-lift model typically contains an internally strongly \(C_6\)-connected subgraph of the natural size \(\asymp q^2/t\).  This does not prove an internal lower bound for arbitrary graphs below \(1/3\); it only explains why this random-lift obstruction does not extend to fixed power scales \(q\asymp t^s\) with \(s>2\).

\begin{proposition}[Zero-slice subgraph]\label{prop:zero-slice}
Let \(C\) be uniformly random in \(\Z_t^{q\times q}\), and suppose that
\[
        \frac{q}{t^2\log q}\to\infty.
\]
Then, with probability tending to \(1\), the graph \(G_C\) contains an internally strongly \(C_6\)-connected subgraph with \((1+o(1))q^2/t\) edges.
\end{proposition}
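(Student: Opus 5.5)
The plan is to take the \emph{zero slice} of the lift: one vertex from every fibre. Let \(H\) be the subgraph of \(G_C\) induced on \(\{x_{i,0}:i\in[q]\}\cup\{y_{j,0}:j\in[q]\}\). Since \(x_{i,0}y_{j,b}\) is an edge exactly when \(b=c_{ij}\), the pair \(x_{i,0}y_{j,0}\) is an edge of \(H\) if and only if \(c_{ij}=0\). The entries are independent and uniform, so \(H\) is distributed as the binomial random bipartite graph on \([q]\times[q]\) with edge probability \(1/t\).

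\emph{Edge count.} We have \(\mathbb E\,e(H)=q^2/t\), and our hypothesis gives \(q^2/t\ge q/t^2\to\infty\). Chebyshev (or Chernoff) therefore yields \(e(H)=(1+o(1))q^2/t\) with probability \(1-o(1)\).

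\emph{Codegrees.} Fix two distinct rows \(i,i'\). Their codegree in \(H\) is \(\mathrm{Bin}(q,t^{-2})\) with mean \(\mu=q/t^2\), and by hypothesis \(\mu/\log q\to\infty\). The Chernoff lower tail gives \(\Pr(\text{codeg}<\mu/2)\le e^{-\mu/8}=q^{-\omega(1)}\). A union bound over the \(O(q^2)\) pairs of rows and the \(O(q^2)\) pairs of columns then shows that, with probability \(1-o(1)\), every two distinct vertices on the same side of \(H\) have at least \(\mu/2\ge3\) common neighbours. This is the only probabilistic input beyond the edge count, and it is where the condition \(q/(t^2\log q)\to\infty\) is used.

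\emph{Deterministic verification.} Assume all codegrees on each side are at least \(3\). The argument has three cases.
\begin{enumerate}[label=\textup{(\roman*)}]
\item Adjacent edges sharing a right vertex, \(ab\) and \(a'b\): choose a common neighbour \(b'\ne b\) of \(a\) and \(a'\). Then \(a-b-a'-b'-a\) is a \(C_4\) in \(H\).
\item Adjacent edges sharing a left vertex: the argument is symmetric to case (i).
\item Disjoint edges \(ab\) and \(a'b'\) with \(a\ne a'\), \(b\ne b'\): choose a common neighbour \(a''\notin\{a,a'\}\) of \(b\) and \(b'\), and a common neighbour \(b''\notin\{b,b'\}\) of \(a\) and \(a'\). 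Codegree at least \(3\) makes both choices possible. The six vertices are distinct, so \(a-b-a''-b'-a'-b''-a\) is a \(C_6\) in \(H\) containing both edges.
\end{enumerate}
Hence \(H\) is internally strongly \(C_6\)-connected.

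None of these steps is a real obstacle. The only point needing care is the concentration of all \(\Theta(q^2)\) codegrees at once, which is exactly what the logarithmic slack in the hypothesis buys.
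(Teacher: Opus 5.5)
Your proof is correct and follows the paper's argument. You take the zero slice \(G_C[X_0,Y_0]\) as a binomial random bipartite graph with edge probability \(1/t\), concentrate the edge count, control all \(\operatorname{Bin}(q,t^{-2})\) codegrees by a union bound over same-side pairs, and then exhibit the cycles deterministically. The only difference is minor: you secure codegree at least \(3\) via the Chernoff bound at \(\mu/2\), so \(a''\notin\{a,a'\}\) and \(b''\notin\{b,b'\}\) can be chosen unconditionally, whereas the paper only secures codegree at least \(2\) (bounding \(\Pr(X<2)\) directly) and so first handles the case where both cross-edges are present by a \(C_4\).
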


\begin{proof}
Let
\[
        X_0=\{x_{i,0}:i\in[q]\},
        \qquad
        Y_0=\{y_{j,0}:j\in[q]\},
\]
and let \(F=G_C[X_0,Y_0]\).  For each pair \((i,j)\), the edge \(x_{i,0}y_{j,0}\) is present exactly when \(c_{ij}=0\).  Hence \(F\) has the distribution of the binomial random bipartite graph \(G(q,q,1/t)\).

The number of edges of \(F\) has mean \(q^2/t\), so Chernoff's inequality gives
\[
        e(F)=(1+o(1))\frac{q^2}{t}
\]
with probability tending to \(1\), since \(q^2/t\to\infty\).

For two distinct vertices on the same side of \(F\), their common neighbourhood size is binomial with parameters \(q\) and \(1/t^2\), and has mean
\[
        \mu=\frac q{t^2}.
\]
By assumption, \(\mu/\log q\to\infty\).  If \(X\sim\operatorname{Bin}(q,1/t^2)\), then
\begin{align*}
        \Pr(X<2)
        &=\Pr(X=0)+\Pr(X=1) \\
        &=(1-t^{-2})^q+qt^{-2}(1-t^{-2})^{q-1} \\
        &\le e^{-\mu}+\mu e^{-\mu+t^{-2}}
         =\exp(-(1+o(1))\mu)
         =o(q^{-2}).
\end{align*}
A union bound over all pairs of vertices on both sides shows that, with probability tending to \(1\), every two same-side vertices of \(F\) have at least two common neighbours.

On this event, \(F\) is internally strongly \(C_6\)-connected.  If two adjacent edges share a left endpoint \(x\), say \(xy\) and \(xy'\), then \(y\) and \(y'\) have at least two common neighbours on the left side; one is \(x\), so there is another, which gives a \(C_4\).  The case of a shared right endpoint is identical.

Now take two disjoint edges \(xy\) and \(x'y'\).  If both cross-edges \(xy'\) and \(x'y\) are present, then the two edges lie on the corresponding \(C_4\).  Otherwise, at most one of \(y,y'\) is a common neighbour of \(x,x'\).  Since \(x,x'\) have at least two common neighbours, there is a common neighbour \(z\notin\{y,y'\}\).  Similarly, there is a common neighbour \(w\notin\{x,x'\}\) of \(y,y'\).  The chosen vertices lie on the appropriate sides and avoid the endpoints just specified, so
\[
        x-y-w-y'-x'-z-x
\]
is a simple \(C_6\) containing the two original edges.  Thus every two distinct edges of \(F\) lie on a \(C_4\) or \(C_6\), and adjacent pairs lie on a \(C_4\).
\end{proof}

For the parametrization \(q\asymp t^s\), the hypothesis \(q/(t^2\log q)\to\infty\) holds for every fixed \(s>2\), corresponding to \(0<\beta<1/3\).

\section{Concluding remarks}

The internal core theorem and the random-lift obstruction identify the exponent \(1/3\) as a natural boundary for the methods in this paper.  The positive theorem gives internal \(C_{\le6}\)- and \(C_{\le8}\)-connected cores of the expected sizes \(\rho^3n^2\) and \(\rho^2n^2\) whenever \(\rho\ge n^{-1/3}\).  The negative theorem shows that adding the adjacent-edge \(C_4\) requirement to the internal \(C_6\) statement changes the problem: at every fixed scale \(\rho=\Theta(n^{-\beta})\), \(1/3\le\beta<1/2\), no constant multiple of \(\rho^3n^2\) can be forced for internally strongly \(C_6\)-connected subgraphs.

The ambient strong \(C_6\) companion in \cref{sec:ambient-c6} should be read with the witness convention in mind.  Its witness cycles may use edges outside the selected edge set, while all cycles in \cref{thm:internal-cores-intro,thm:negative-intro} are required to lie inside the subgraph under discussion.

\section*{Acknowledgements}
The author acknowledges the use of OpenAI's ChatGPT during the preparation of this manuscript. While it was used for ideation, formulation, proof exploration and refinement, narrowing the search space, programming, LaTeX formatting and other forms of orchestration, the author nonetheless takes full responsibility for the accuracy of the final contents of this paper.

\end{document}